\documentclass[reqno,12pt]{amsart}
\usepackage{amsmath,amssymb,color}
\usepackage{amsthm}
\usepackage{comment}

\setlength{\unitlength}{0cm}
\setlength{\topmargin}{-0.3in}
\setlength{\leftmargin}{-0.3in}
\setlength{\textheight}{8.9in} \setlength{\textwidth}{6.6in}
\setlength{\oddsidemargin}{-0.1in}
\setlength{\evensidemargin}{-0.1in}

\newtheorem{theorem}{Theorem}

\newtheorem{proposition}{Proposition}

\def\re{\mathbb{R}}

\def\N{\mathbb{N}}

\def\eps{\varepsilon}

\def\pd{\partial}
\def\ol{\overline}
\def\ul{\underline}

\def\la{\lambda}

\def\({\left(}
\def\){\right)}

\def\pd{\partial}

\def\|{\Vert}

\begin{document}
\title[Asymptotic behavior of radial solutions]{Notes on asymptotic behavior of radial solutions for some weighted elliptic equations on the annulus}

\author{Futoshi Takahashi}

\address{
Department of Mathematics, Osaka Metropolitan University \\
3-3-138, Sumiyoshi-ku, Sugimoto-cho, Osaka, Japan \\
}

\email{futoshi@omu.ac.jp}

\begin{abstract}
In this paper, we study the asymptotic behavior of radial solutions for several weighted elliptic equations with power type or exponential type nonlinearities on an annulus. 
\end{abstract}

\subjclass[2020]{Primary 34C23; Secondary 37G99.}

\keywords{Weighted elliptic equations, radial solutions, asymptotic behavior, Green's function.}
\date{\today}

\dedicatory{}

\maketitle

\section{Introduction.}

Let $A = \{ x \in \re^N \ | \ 0 < a < |x| < b \}$ be an annulus in $\re^N$, $N \ge 2$.
In this paper, we study the following elliptic problem with the inverse-square weight in $\re^2$:
\begin{equation}
\label{Henon(N=2)}
	\begin{cases}
	-\Delta u = \dfrac{u^p(x)}{|x|^2}, \quad &x \in A \subset \re^2, \\
	u(x) > 0, \quad &x \in A, \\
	u(x) = 0, \quad &x \in \pd A, 
	\end{cases}
\end{equation}
where $p > 1$.
In higher dimensions $N \ge 3$, we also consider
\begin{equation}
\label{Henon(N>2)}
	\begin{cases}
	-\Delta u = (N-2)^2 \dfrac{u^p(x)}{|x|^{2(N-1)}}, \quad &x \in A \subset \re^N, \\
	u(x) > 0, \quad &x \in A, \\
	u(x) = 0, \quad &x \in \pd A. 
	\end{cases}
\end{equation}
Since the embedding of the radial Sobolev space $H^1_{0, rad}(A) \hookrightarrow L^{p+1}(A)$ is compact for any $p > 1$, 
we can easily obtain a radially symmetric solution to \eqref{Henon(N=2)} or \eqref{Henon(N>2)}, by considering a suitable minimization problem. 
Later it will be shown that the positive radial solution is unique for both problems and it is denoted by $u_p$.
In this paper, we study the asymptotic behavior of the radial solution $u_p$ to \eqref{Henon(N=2)} or \eqref{Henon(N>2)} 
as $p \to \infty$ and $p \searrow 1$.
In the following, we denote
\begin{equation}
\label{Def:L_p}
	L_p = \int_0^1 \frac{ds}{\sqrt{1-s^{p+1}}}, \quad (p>0).
\end{equation}

\begin{theorem}
\label{Thm:N=2(global)}
For any $p > 1$, the problem \eqref{Henon(N=2)} admits a unique radially symmetric solution $u_p(x) = u_p(|x|)$.
Furthermore, it holds that
\begin{equation}
\label{p_large(N=2)} 
	u_p(|x|) \to \frac{2}{\log b - \log a} 
	\begin{cases}
	\log |x| - \log a, &\quad \text{for} \ a \le |x| \le \sqrt{ab} \\
	\log b - \log |x|, &\quad \text{for} \ \sqrt{ab} \le |x| \le b
	\end{cases} 
\end{equation}
in $C^0(\ol{A})$ as $p \to \infty$, and
\begin{equation}
\label{p_1(N=2)}
	u_p(|x|) = \(\frac{2}{\log b - \log a}\)^{\frac{2}{p-1}} \(\frac{p+1}{2}\)^{\frac{1}{p-1}} L_p^{\frac{2}{p-1}} \left[ \sin \( \frac{\pi(\log b - \log |x|)}{\log b - \log a} \) + o_p(1) \right]
\end{equation}
in $C^0(\ol{A})$ as $p \searrow 1$, where $o_p(1) \to 0$ in $C^0(\ol{A})$ as $p \searrow 1$.
\end{theorem}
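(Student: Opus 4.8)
The plan is to reduce \eqref{Henon(N=2)} to an essentially explicit one--dimensional Dirichlet problem and to read off all three assertions from the phase plane of that ODE. Writing $u=u(r)$ with $r=|x|$ and substituting $t=\log r$, one has $r\,\tfrac{d}{dr}=\tfrac{d}{dt}$, so the radial form $-(ru')'=u^p/r$ of \eqref{Henon(N=2)} becomes
\[
-\ddot w=w^p,\quad t\in(\alpha,\beta),\qquad w(\alpha)=w(\beta)=0,\quad w>0,
\]
where $w(t)=u(e^t)$, $\alpha=\log a$, $\beta=\log b$ and $\ell:=\beta-\alpha=\log b-\log a$; radial solutions of \eqref{Henon(N=2)} correspond exactly to positive solutions of this ODE (an identical reduction with $|x|^{2-N}$ in place of $\log|x|$ handles \eqref{Henon(N>2)}, the factor $(N-2)^2$ being chosen for precisely this). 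Since $-\ddot w=w^p\ge0$, $w$ is strictly concave and attains its maximum at a unique interior point $t_\ast$, with $w(t_\ast)=M$ and $\dot w(t_\ast)=0$. The first integral $\tfrac12\dot w^2+\tfrac1{p+1}w^{p+1}\equiv\tfrac1{p+1}M^{p+1}$, separation of variables and the substitution $w=Ms$ give $t_\ast-\alpha=\beta-t_\ast=\sqrt{(p+1)/2}\,M^{-(p-1)/2}L_p$, so $t_\ast=(\alpha+\beta)/2=:t_0$ and
\[
M=M_p:=\Big(\tfrac{\sqrt{2(p+1)}\,L_p}{\ell}\Big)^{2/(p-1)}=\Big(\tfrac{2}{\ell}\Big)^{2/(p-1)}\Big(\tfrac{p+1}{2}\Big)^{1/(p-1)}L_p^{2/(p-1)}.
\]
Since $M_p$ is uniquely determined and the data $(w,\dot w)(t_0)=(M_p,0)$ determine $w$, uniqueness of the radial solution $u_p$ follows (existence being already known), and $M_p=\max_{\ol A}u_p$, attained on $|x|=\sqrt{ab}$.

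For the limit $p\to\infty$: from \eqref{Def:L_p}, dominated convergence gives $L_p\to1$, hence $\log M_p=\tfrac1{p-1}\log\!\big(2(p+1)L_p^2/\ell^2\big)\to0$, i.e.\ $M_p\to1$; and the first integral at $t=\alpha$ yields $\dot w_p(\alpha)=\sqrt{2/(p+1)}\,M_p^{(p+1)/2}=2L_pM_p/\ell\to 2/\ell$. On $[\alpha,t_0]$ concavity squeezes $w_p$ between its chord through $(\alpha,0),(t_0,M_p)$ and its tangent at $\alpha$:
\[
\tfrac{2M_p}{\ell}(t-\alpha)\le w_p(t)\le\tfrac{2L_pM_p}{\ell}(t-\alpha),
\]
the gap being at most $M_p(L_p-1)\to0$; as $\tfrac{2M_p}{\ell}(t-\alpha)\to\tfrac{2}{\ell}(t-\alpha)$ uniformly and $[t_0,\beta]$ is symmetric, $w_p$ converges uniformly to the tent function $\min\{\tfrac2\ell(t-\alpha),\tfrac2\ell(\beta-t)\}$. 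Undoing $t=\log|x|$ gives \eqref{p_large(N=2)} in $C^0(\ol A)$.

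For the limit $p\searrow1$: set $v_p:=u_p/M_p=w_p/M_p$, so that $0\le v_p\le1$, $v_p(t_0)=1$, $\dot v_p(t_0)=0$, $v_p(\alpha)=v_p(\beta)=0$ and
\[
-\ddot v_p=M_p^{p-1}v_p^{p},\qquad M_p^{p-1}=\tfrac{2(p+1)L_p^2}{\ell^2}\longrightarrow\tfrac{4\,(\pi/2)^2}{\ell^2}=\tfrac{\pi^2}{\ell^2},
\]
using $L_p\to L_1=\pi/2$. Since $\|\ddot v_p\|_\infty\le M_p^{p-1}$ is bounded and $\dot v_p(t_0)=0$, the family $(v_p)$ is bounded in $C^{1,1}[\alpha,\beta]$; hence along any sequence $p_n\searrow1$ a subsequence converges in $C^1$ to some $v$ with $v(\alpha)=v(\beta)=0$, $v(t_0)=1$, $0\le v\le1$. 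To pass to the limit in the nonlinearity I would use $\sup_{[0,1]}(s-s^p)\le(p-1)/p\to0$, which together with $v_p\to v$ uniformly gives $v_p^{p}\to v$ uniformly; therefore $-\ddot v=\tfrac{\pi^2}{\ell^2}v$. As $\pi^2/\ell^2$ is the first Dirichlet eigenvalue of $-d^2/dt^2$ on $(\alpha,\beta)$ and $v\ge0$, $v\not\equiv0$, necessarily $v(t)=\sin(\pi(t-\alpha)/\ell)$ (the normalization being fixed by $v(t_0)=1$). The limit is independent of the subsequence, so $v_p\to\sin(\pi(\cdot-\alpha)/\ell)$ in $C^0$; translating back by $t=\log|x|$ and using $\sin\theta=\sin(\pi-\theta)$ gives \eqref{p_1(N=2)}.

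The main obstacle is the $p\searrow1$ step, which is where genuine analysis enters: one must (i) pass to the limit in $v_p^{p}$ uniformly up to the endpoints, where $v_p\to0$, handled by $\sup_{[0,1]}(s-s^p)\to0$; (ii) exclude the degenerate limit $v\equiv0$, handled by the normalization $v_p(t_0)\equiv1$; and (iii) track the exact constant $L_1=\pi/2$, which is precisely what makes $M_p^{p-1}$ converge to the first eigenvalue $\pi^2/\ell^2$ and so forces the limit profile to be the first Dirichlet eigenfunction. The remaining parts reduce to the explicit phase-plane computation above together with routine concavity and Arzel\`a--Ascoli arguments.
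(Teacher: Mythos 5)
Your proposal is correct, and for the reduction step and the $p\searrow 1$ limit it follows essentially the same route as the paper: the logarithmic change of variables turns \eqref{Henon(N=2)} into the one-dimensional Emden problem, the time map gives the explicit value of $\|u_p\|_{L^\infty}$ (the paper's $\xi_p$ in \eqref{xi_p}) together with uniqueness, and the normalized functions converge to the first Dirichlet eigenfunction because $\xi_p^{p-1}\to\pi^2/\ell^2$. Your treatment of the nonlinearity via $\sup_{[0,1]}(s-s^p)\le (p-1)/p$ is a welcome extra detail that the paper's Proposition \ref{Prop:W_p(pto1)} leaves implicit when it ``takes the limit in the equation.'' Where you genuinely diverge is the $p\to\infty$ step: the paper (Proposition \ref{Prop:W_p(global)}) first shows $|W_p'|\le\|W_p\|_{L^p}^p=O(1)$, extracts a uniform limit by Arzel\`a--Ascoli, proves that $W_p<1$ eventually away from the midpoint, and identifies the limit as a solution of $-W''=0$ off the midpoint; you instead trap $w_p$ on $[\alpha,t_0]$ between the chord through $(\alpha,0)$, $(t_0,M_p)$ and the tangent at $\alpha$ of slope $2L_pM_p/\ell$, with gap at most $M_p(L_p-1)\to 0$. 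Your squeeze is more elementary and quantitative (it yields a rate $O(L_p-1)$ and avoids compactness and subsequence extraction altogether), at the cost of being tied to the one-dimensional, autonomous, concave setting; the paper's compactness argument is the one that generalizes to situations where no explicit tangent-line bound is available. Both arguments rest on the same two facts, $M_p\to1$ and $L_p\to1$, so the difference is one of packaging rather than substance, but yours is arguably the cleaner proof of \eqref{p_large(N=2)}.
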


\begin{theorem}
\label{Thm:N>2(global)}
For any $p > 1$, the problem \eqref{Henon(N>2)} admits a unique radially symmetric solution $u_p(x) = u_p(|x|)$.
Furthermore, it holds that
\begin{equation}
\label{p_large(N>2)} 
	u_p(|x|) \to \frac{2}{a^{2-N} - b^{2-N}} 
	\begin{cases}
	a^{2-N} - |x|^{2-N}, &\quad \text{for} \ a \le |x| \le \(\frac{a^{2-N} + b^{2-N}}{2}\)^{\frac{1}{2-N}} \\
	|x|^{2-N} - b^{2-N}, &\quad \text{for} \ \(\frac{a^{2-N} + b^{2-N}}{2}\)^{\frac{1}{2-N}} \le |x| \le b
	\end{cases} 
\end{equation}
in $C^0(\ol{A})$ as $p \to \infty$, and
\begin{equation}
\label{p_1(N>2)} 
	u_p(|x|) = \(\frac{2}{a^{2-N} - b^{2-N}}\)^{\frac{2}{p-1}} \(\frac{p+1}{2}\)^{\frac{1}{p-1}} L_p^{\frac{2}{p-1}} \left[ \sin \( \frac{\pi(|x|^{2-N} - b^{2-N})}{a^{2-N} - b^{2-N}} \) + o_p(1) \right]
\end{equation}
in $C^0(\ol{A})$ as $p \searrow 1$, where $o_p(1) \to 0$ in $C^0(\ol{A})$ as $p \searrow 1$.
\end{theorem}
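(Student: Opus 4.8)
The plan is to reduce both boundary value problems to a single one-dimensional ODE, and then to analyze that ODE by an elementary quadrature.

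First I would restrict to radial functions $u=u(r)$, $r=|x|\in(a,b)$, and use the $p$-independent change of variable $t=\log r$ when $N=2$ and $t=r^{2-N}$ when $N\ge 3$, setting $v(t)=u(r)$. A direct computation gives
\[
 u''+\frac{N-1}{r}u' \;=\; \frac1{r^{2}}\,v''(t)\quad(N=2),\qquad
 u''+\frac{N-1}{r}u' \;=\; (N-2)^{2}r^{-2(N-1)}\,v''(t)\quad(N\ge3),
\]
so that in both cases \eqref{Henon(N=2)} and \eqref{Henon(N>2)} become the same problem
\[
 -v''=v^{p}\ \text{ on }(\alpha,\beta),\qquad v>0\ \text{ on }(\alpha,\beta),\qquad v(\alpha)=v(\beta)=0,
\]
with $(\alpha,\beta)=(\log a,\log b)$ when $N=2$ and $(\alpha,\beta)=(b^{2-N},a^{2-N})$ when $N\ge3$; write $\ell=\beta-\alpha$. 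Any radial solution of the PDE corresponds under this diffeomorphism to a solution of this ODE (existence of one such solution being the variational fact noted in the introduction), so uniqueness of the radial solution will follow from uniqueness for the ODE. Since $r\mapsto t$ is a fixed homeomorphism of $[a,b]$ onto $[\alpha,\beta]$, it suffices to prove the claimed asymptotics for $v$ in $C^{0}([\alpha,\beta])$ and then translate back, using $(t-\alpha)/\ell=(\log|x|-\log a)/(\log b-\log a)$, resp. $(|x|^{2-N}-b^{2-N})/(a^{2-N}-b^{2-N})$, and $\sin(\pi\theta)=\sin(\pi(1-\theta))$.

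Next I would read off uniqueness and the normalizing constant from the first integral. Since $v''=-v^{p}<0$, every solution is strictly concave, hence has a unique maximum point $t_{0}$ with $v(t_{0})=M:=\|v\|_{\infty}$ and $v'(t_{0})=0$. Multiplying the ODE by $v'$ gives $\tfrac12(v')^{2}+\tfrac{v^{p+1}}{p+1}=\tfrac{M^{p+1}}{p+1}$; separating variables on each side of $t_{0}$ and putting $v=M\sigma$ yields
\[
 t_{0}-\alpha=\beta-t_{0}=\sqrt{\tfrac{p+1}{2}}\;M^{\frac{1-p}{2}}\,L_{p}.
\]
Thus $t_{0}=(\alpha+\beta)/2$, the number $M$ is uniquely determined, and then $v$ is uniquely determined by its Cauchy data at $t_{0}$, which proves the uniqueness statement. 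Solving the last display for $M$ gives
\[
 M=\left(\frac{2}{\ell}\right)^{\frac{2}{p-1}}\left(\frac{p+1}{2}\right)^{\frac{1}{p-1}}L_{p}^{\frac{2}{p-1}},
\]
exactly the prefactor occurring in \eqref{p_1(N=2)} and \eqref{p_1(N>2)}; taking logarithms and using $L_{p}\to1$ one finds $M\to1$ as $p\to\infty$.

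Finally I would determine the profile. After translating so that $(\alpha,\beta)=(0,\ell)$ and $t_{0}=\ell/2$, set $w=v/M\in[0,1]$, so $w(0)=w(\ell)=0$, $w(\ell/2)=1$, and the first integral together with the formula for $M$ gives $w'=\tfrac{2L_{p}}{\ell}\sqrt{1-w^{p+1}}$ on $[0,\ell/2]$. Integrating, this is the implicit relation $\Phi_{p}(w(t))=\tfrac{2L_{p}}{\ell}t$ on $[0,\ell/2]$, where $\Phi_{p}(\tau):=\int_{0}^{\tau}(1-\sigma^{p+1})^{-1/2}\,d\sigma$ is an increasing bijection of $[0,1]$ onto $[0,L_{p}]$; that is, $w(t)=\Phi_{p}^{-1}\!\big(\tfrac{2L_{p}}{\ell}t\big)$ on $[0,\ell/2]$, extended by the symmetry $w(t)=w(\ell-t)$. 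From $\tau\le\Phi_{p}(\tau)\le L_{p}$ one gets $\sup_{[0,1]}|\Phi_{p}(\tau)-\tau|\le L_{p}-1$, and from $\sigma^{p+1}\le\sigma^{2}$ one gets $0\le\arcsin\tau-\Phi_{p}(\tau)\le \tfrac{\pi}{2}-L_{p}$; since $L_{p}\to1$ as $p\to\infty$ and $L_{p}\to\pi/2$ as $p\searrow1$ (both by dominated convergence), passing to the inverse functions yields $\|\Phi_{p}^{-1}-\mathrm{id}\|_{\infty}\to0$ (for $p\to\infty$) and $\|\Phi_{p}^{-1}-\sin\|_{\infty}\to0$ (for $p\searrow1$) on the relevant ranges, whence $w(t)\to\tfrac{2}{\ell}\min(t,\ell-t)$ in $C^{0}([0,\ell])$ as $p\to\infty$ and $w(t)\to\sin(\pi t/\ell)$ in $C^{0}([0,\ell])$ as $p\searrow1$. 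Combining $v=Mw$ with $M\to1$ (for $p\to\infty$) and with the closed form of $M$ (for $p\searrow1$), and undoing the change of variable $t\leftrightarrow|x|$, gives \eqref{p_large(N=2)}, \eqref{p_1(N=2)}, \eqref{p_large(N>2)}, \eqref{p_1(N>2)}. The key step is the change of variables in the first paragraph, which simultaneously eliminates the weight and makes the problem one-dimensional; after that the analysis is routine, the only points needing care being the two elementary limits of $L_{p}$, the handling of the inverse functions near the endpoint of the interval, and converting the two branches and the midpoint of $(\alpha,\beta)$ back to the variable $|x|$.
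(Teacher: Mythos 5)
Your reduction and normalization are the same as the paper's: the substitution $s=r^{2-N}$ (resp.\ $\log r$) turns both weighted problems into the $1$-D Emden problem $-v''=v^p$ on an interval of length $\ell$, and the first integral $\tfrac12(v')^2+\tfrac{v^{p+1}}{p+1}=\tfrac{M^{p+1}}{p+1}$ together with the time map gives both uniqueness and the exact value $M=\bigl(\tfrac{2}{\ell}\bigr)^{\frac{2}{p-1}}\bigl(\tfrac{p+1}{2}\bigr)^{\frac{1}{p-1}}L_p^{\frac{2}{p-1}}$ (this is the paper's Proposition \ref{Prop:W_p(norm)}, which it quotes from Shibata). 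Where you genuinely diverge is in identifying the limit profiles. The paper argues by compactness: a uniform gradient bound from $\|W_p\|_{L^p}^p=O(1)$ plus Ascoli--Arzel\`a, then the pointwise claim $W_p(s)<1$ away from the midpoint to pass to the limit equation $-W''=0$ off the midpoint as $p\to\infty$ (Proposition \ref{Prop:W_p(global)}), and the eigenvalue identification $-w''=(\pi/\ell)^2 w$ as $p\searrow1$ (Proposition \ref{Prop:W_p(pto1)}). You instead invert the time map explicitly, $w(t)=\Phi_p^{-1}(2L_pt/\ell)$ with $\Phi_p(\tau)=\int_0^\tau(1-\sigma^{p+1})^{-1/2}d\sigma$, and control $\Phi_p$ uniformly: $0\le\Phi_p(\tau)-\tau\le L_p-1$ and $0\le\arcsin\tau-\Phi_p(\tau)\le\tfrac{\pi}{2}-L_p$, which transfer to the inverses because $\mathrm{id}$ and $\sin$ are $1$-Lipschitz. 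This is correct (both bounds and the two limits $L_p\to1$, $L_p\to\pi/2$ check out), it yields explicit error rates $L_p-1$ and $\tfrac{\pi}{2}-L_p$ that the paper's soft argument does not, and it bypasses both the subsequence extraction and the separate claim \eqref{claim1}. The trade-off is that your method is tied to the exactly quadrature-solvable $1$-D situation, whereas the paper's compactness scheme is the one that generalizes (and is reused verbatim for the exponential nonlinearity in Section 5). Note also that the paper's claimed convergence in Proposition \ref{Prop:W_p(pto1)} is in $C^1$, not just $C^0$; your argument as written only gives $C^0$, but that is all Theorem \ref{Thm:N>2(global)} asserts, so nothing is missing for the statement at hand.
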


Note that in both Theorem \ref{Thm:N=2(global)} and Theorem \ref{Thm:N>2(global)},
$L^{\infty}$-blow up of $u_p$ does not occur when $p \to \infty$.
On the other hand, $\| u_p \|_{L^{\infty}(A)}$ could diverge as $p \searrow 1$ if the annulus is ``thin" in the sense that
$\log b - \log a < 2$ when $N=2$, or $a^{2-N} - b^{2-N} < 2$ when $N \ge 3$.

Also there are local convergence results as follows.

\begin{theorem}
\label{Thm:N=2(local)}
Let $u_p(x) = u_p(r)$, $r = |x|$, be the unique radial solution to \eqref{Henon(N=2)} for $p > 1$.
Define $\eps_p > 0$ and $\tilde{u}_p$ by
\begin{equation}
\label{utilde(N=2)}
	\begin{cases}
	&p \eps_p^2 \| u_p \|_{L^{\infty}(A)}^{p-1} \equiv 1, \\
	&\tilde{u_p}(t) = \frac{p}{\| u_p \|_{L^{\infty}(A)}} \left\{ u_p \(e^{-(\eps_p t - \frac{\log ab}{2})}\) - \| u_p \|_{L^{\infty}(A)} \right \}, \\
	&t \in I_p = (-\frac{\log b - \log a}{\eps_p}, \frac{\log b - \log a}{\eps_p}). 
	\end{cases}
\end{equation}
Then $\eps_p \to 0$ and
\[
	\tilde{u_p}(t) \to U(t) = \log \frac{4 e^{\sqrt{2}t}}{(1+e^{\sqrt{2}t})^2} \quad \text{in} \ C^1_{loc}(\re)
\]
as $p \to \infty$.
\end{theorem}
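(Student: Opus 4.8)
The plan is to reduce \eqref{Henon(N=2)} to a one-dimensional autonomous equation, use a quadrature identity to control the scale $\eps_p$, and then run a compactness (blow-up) argument on the rescaled functions. \emph{Reduction.} Put $s=\log r$ and $v(s)=u_p(e^s)$ on $(\alpha,\beta)$, $\alpha=\log a$, $\beta=\log b$. Using $\Delta u=u_{rr}+r^{-1}u_r$, problem \eqref{Henon(N=2)} becomes $-v''=v^p$ on $(\alpha,\beta)$ with $v>0$ and $v(\alpha)=v(\beta)=0$, a boundary value problem invariant under $s\mapsto\alpha+\beta-s$. By the uniqueness assertion of Theorem~\ref{Thm:N=2(global)} this problem has a unique solution, so $v$ is symmetric about the midpoint $s_0=\frac{\alpha+\beta}{2}=\frac{\log ab}{2}$; in particular $v'(s_0)=0$ and, since $v''=-v^p<0$, $v(s_0)=\max v=\|u_p\|_{L^\infty(A)}=:M$.

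\emph{The scale.} Multiplying $-v''=v^p$ by $v'$ gives $\frac12(v')^2+\frac{v^{p+1}}{p+1}\equiv\frac{M^{p+1}}{p+1}$, i.e. $(v')^2=\frac{2}{p+1}(M^{p+1}-v^{p+1})$. Integrating $v'\,(M^{p+1}-v^{p+1})^{-1/2}$ over $(\alpha,s_0)$ and substituting $v=M\sigma$ yields the time-map identity
\[
\frac{\log b-\log a}{2}=\sqrt{\frac{p+1}{2}}\;\frac{L_p}{M^{(p-1)/2}}\,,\qquad\text{equivalently}\qquad M^{p-1}=\frac{2(p+1)L_p^{\,2}}{(\log b-\log a)^2}\,.
\]
Since $L_p=\int_0^1(1-s^{p+1})^{-1/2}\,ds\ge\int_0^1 ds=1$, the defining relation $p\eps_p^2M^{p-1}=1$ gives $\eps_p^2\le\frac{(\log b-\log a)^2}{2p(p+1)}\to0$, so $\eps_p\to0$ and the interval $I_p$ in \eqref{utilde(N=2)} exhausts $\re$.

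\emph{Rescaled problem and passage to the limit.} Under $s=s_0-\eps_p t$ one has $u_p\bigl(e^{-(\eps_p t-\frac{\log ab}{2})}\bigr)=v(s)$ and $v=M(1+\tilde u_p/p)$, whence $v/M=1+\tilde u_p/p\in[0,1]$ and $\tilde u_p\le0$. Differentiating twice and using $v''=-v^p$ together with $p\eps_p^2M^{p-1}=1$ gives
\[
-\tilde u_p''(t)=\Bigl(1+\tfrac{\tilde u_p(t)}{p}\Bigr)^{p}\in[0,1]\quad\text{on }I_p,\qquad \tilde u_p(0)=0,\quad \tilde u_p'(0)=0\,.
\]
From $|\tilde u_p''|\le1$ together with $\tilde u_p(0)=\tilde u_p'(0)=0$ it follows that on each $[-R,R]$ and for $p$ large, $|\tilde u_p'|\le R$ and $|\tilde u_p|\le R^2/2$; hence $\{\tilde u_p\}$ is bounded in $C^{1,1}([-R,R])$ and, by Arzel\`a--Ascoli and a diagonal argument, a subsequence converges in $C^1_{loc}(\re)$ to some $w$ with $w(0)=w'(0)=0$. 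Because $(1+\xi/p)^p\to e^{\xi}$ uniformly on compact sets of $\xi$ and $\tilde u_p\to w$ locally uniformly, letting $p\to\infty$ in $\tilde u_p'(t)=-\int_0^t(1+\tilde u_p(\tau)/p)^{p}\,d\tau$ gives $w'(t)=-\int_0^t e^{w(\tau)}\,d\tau$, so $w\in C^2$ and $-w''=e^w$, $w(0)=w'(0)=0$. The vector field $(w,w')\mapsto(w',-e^{w})$ is locally Lipschitz, so this initial value problem has a unique global solution; since $U(t)=\log\frac{4e^{\sqrt2 t}}{(1+e^{\sqrt2 t})^{2}}$ satisfies $U(0)=U'(0)=0$ and $U''=-e^{U}$, we get $w=U$. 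As the limit is independent of the chosen subsequence, the whole family converges: $\tilde u_p\to U$ in $C^1_{loc}(\re)$.

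\emph{Main difficulty.} The one genuinely delicate point is the second step: mere boundedness of $\|u_p\|_{L^\infty(A)}$ would not yield $p\eps_p^2\to0$, and one really needs the exact time-map identity (equivalently, the sharp rate behind Theorem~\ref{Thm:N=2(global)}). Once $\eps_p\to0$ is in hand, the remaining analysis is the standard blow-up argument, the point of the normalization $p\eps_p^2\|u_p\|_{L^\infty(A)}^{p-1}=1$ being exactly that it turns the limit equation into the one-dimensional Liouville equation $-U''=e^U$.
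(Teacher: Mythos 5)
Your proof is correct and follows essentially the same route as the paper: reduce to the one-dimensional Emden problem $-v''=v^p$ via $s=\log r$, use the time-map identity for $\|v\|_{L^\infty}$ to get $\eps_p\to 0$, and run the standard blow-up/Arzel\`a--Ascoli argument on the rescaled equation $-\tilde u_p''=(1+\tilde u_p/p)^p$ followed by uniqueness of the limiting initial value problem. The only (minor, and arguably cleaner) deviation is that you obtain the local $C^1$ bounds directly from $|\tilde u_p''|\le 1$ and the normalization $\tilde u_p(0)=\tilde u_p'(0)=0$, whereas the paper derives a uniform gradient bound for $W_p$ from its $L^p$-norm and then rescales.
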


\begin{theorem}
\label{Thm:N>2(local)}
Let $u_p(x) = u_p(r)$, $r = |x|$, be the unique radial solution to \eqref{Henon(N>2)} for $p > 1$.
Define $\eps_p > 0$ and $\tilde{u}_p$ by
\begin{align}
\label{utilde(N>2)}
	\begin{cases}
	&p \eps_p^2 \| u_p \|_{L^{\infty}(A)}^{p-1} \equiv 1, \\
	&\tilde{u_p}(t) = \frac{p}{\| u_p \|_{L^{\infty}(A)}} \left\{ u_p \(\(\eps_p t - \frac{a^{2-N} + b^{2-N}}{2}\)^{\frac{1}{2-N}}\) - \| u_p \|_{L^{\infty}(A)} \right \}, \\
	&t \in I_p = (-\frac{a^{2-N}-b^{2-N}}{\eps_p}, \frac{a^{2-N} - b^{2-N}}{\eps_p}).
	\end{cases}
\end{align}
Then $\eps_p \to 0$ and
\[
	\tilde{u_p}(t) \to U(t) = \log \frac{4 e^{\sqrt{2}t}}{(1+e^{\sqrt{2}t})^2} \quad \text{in} \ C^1_{loc}(\re)
\]
as $p \to \infty$.
\end{theorem}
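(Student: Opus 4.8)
The plan is to reduce the radial problem \eqref{Henon(N>2)} to an autonomous ODE through an Emden--Fowler type change of variables and then to run the standard rescaling analysis around the maximum; the argument parallels the one for Theorem \ref{Thm:N=2(local)}, with $s=\log r$ replaced by $s=r^{2-N}$. Writing $u_p(x)=u_p(r)$, $r=|x|$, and setting $s=r^{2-N}$, $v(s)=u_p(r)$, a direct computation shows that the first order term in the radial Laplacian cancels and that the factor $(N-2)^2$ in \eqref{Henon(N>2)} is exactly the one needed so that $v$ solves
\begin{equation*}
	-v''=v^p \quad\text{on}\ (b^{2-N},a^{2-N}),\qquad v>0,\qquad v(b^{2-N})=v(a^{2-N})=0 .
\end{equation*}
Since $s\mapsto r$ is a diffeomorphism onto $(a,b)$, we have $\|v\|_{L^\infty}=\|u_p\|_{L^\infty(A)}=:M_p$. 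Multiplying the equation by $v'$ and integrating yields the first integral $\frac12(v')^2+\frac{1}{p+1}v^{p+1}\equiv\frac{1}{p+1}M_p^{p+1}$, which forces $v$ to be concave, symmetric about the midpoint $s_0:=\frac{a^{2-N}+b^{2-N}}{2}$, and to attain its maximum $M_p$ exactly at $s_0$; integrating the separated equation over a half interval gives the time map identity
\begin{equation*}
	\frac{a^{2-N}-b^{2-N}}{2}=\sqrt{\tfrac{p+1}{2}}\,L_p\,M_p^{\frac{1-p}{2}} .
\end{equation*}
Hence $M_p^{p-1}=\frac{4}{(a^{2-N}-b^{2-N})^2}\cdot\frac{p+1}{2}\,L_p^2\to\infty$ and $M_p\to1$ as $p\to\infty$ (note $L_p$ stays bounded and bounded below by $1$), so $\eps_p=(pM_p^{p-1})^{-1/2}\to0$; in particular $|I_p|\to\infty$, so every fixed compact subset of $\re$ is contained in $I_p$ for $p$ large.

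Next I would derive the equation for the rescaled function. In the $s$ variable the scaling in \eqref{utilde(N>2)} reads $\tilde u_p(t)=\frac{p}{M_p}\bigl(v(s_0+\eps_p t)-M_p\bigr)$, the sign of $t$ being immaterial since $v$ is even about $s_0$. Using $v''=-v^p$ and the normalization $p\eps_p^2M_p^{p-1}=1$ one obtains
\begin{equation*}
	\tilde u_p''(t)=-\Bigl(1+\frac{\tilde u_p(t)}{p}\Bigr)^{p},\qquad \tilde u_p(0)=0,\qquad \tilde u_p'(0)=0 ,
\end{equation*}
where the last equality holds because $s_0$ is the critical point of $v$. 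From $0\le v\le M_p$ we get $-p\le\tilde u_p\le0$, hence $0\le1+\tilde u_p/p\le1$ and therefore $-1\le\tilde u_p''\le0$ on $I_p$. Together with $\tilde u_p(0)=\tilde u_p'(0)=0$, this yields on any fixed interval $[-R,R]$ and for $p$ large the uniform bounds $|\tilde u_p''|\le1$, $|\tilde u_p'|\le R$, $|\tilde u_p|\le R^2/2$, so $\{\tilde u_p\}$ is bounded in $C^2([-R,R])$ and precompact in $C^1([-R,R])$ by the Arzel\`a--Ascoli theorem.

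Finally I would identify the limit. Let $\tilde u_{p_k}\to U_*$ in $C^1_{loc}(\re)$ along a subsequence. On $[-R,R]$, since $U_*$ is bounded there, $\tilde u_{p_k}/p_k\to0$ uniformly and $p_k\log(1+\tilde u_{p_k}/p_k)=\tilde u_{p_k}+O(\tilde u_{p_k}^2/p_k)\to U_*$ uniformly, so $\bigl(1+\tilde u_{p_k}/p_k\bigr)^{p_k}\to e^{U_*}$ uniformly on $[-R,R]$. Passing to the limit in the identity $\tilde u_{p_k}(t)=-\int_0^t(t-s)\bigl(1+\tilde u_{p_k}(s)/p_k\bigr)^{p_k}\,ds$ gives $U_*(t)=-\int_0^t(t-s)e^{U_*(s)}\,ds$, i.e. $U_*''=-e^{U_*}$ with $U_*(0)=U_*'(0)=0$. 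By uniqueness for this Cauchy problem, and since $U(t)=\log\frac{4e^{\sqrt2 t}}{(1+e^{\sqrt2 t})^2}$ solves it (direct substitution), we conclude $U_*=U$; as the limit does not depend on the subsequence, $\tilde u_p\to U$ in $C^1_{loc}(\re)$.

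The only step that genuinely uses the structure of the problem is the time map identity, from which $M_p^{p-1}\to\infty$ and hence $\eps_p\to0$ and $|I_p|\to\infty$ — without this the rescaled functions would not live on expanding intervals and the limit profile could not emerge. The remaining ingredients, namely the uniform $C^2$ bound and the passage to the limit in the exponential nonlinearity, form the classical Liouville-type argument for large-exponent problems, so I anticipate no essential difficulty there.
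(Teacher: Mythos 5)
Your proof is correct and follows essentially the same route as the paper: the Emden--Fowler substitution $s=r^{2-N}$ reduces \eqref{Henon(N>2)} to the one-dimensional problem \eqref{Eq:W_p}, and your rescaling analysis is exactly the content of Proposition \ref{Prop:W_p(local)} (explicit time map for $\|W_p\|_\infty^{p-1}\to\infty$, rescaled Liouville-type equation, Arzel\`a--Ascoli, and uniqueness of the limiting Cauchy problem). The only cosmetic difference is that you derive the uniform $C^1$ bound on compact sets from $|\tilde u_p''|\le 1$ together with the zero data at $t=0$, whereas the paper uses the global gradient bound $|W_p'|\le \|W_p\|_{L^p}^p=O(1)$; both work.
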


Note that in Theorem \ref{Thm:N=2(local)} and Theorem \ref{Thm:N>2(local)}, 
$U(t)$ is the unique solution of the 1-D Liouville equation
\begin{equation}
\label{Liouville(whole)}
	\begin{cases}
	-U''(t) = e^{U(t)}, \quad t \in \re, \\
	U(0) = U'(0) = 0
	\end{cases}
\end{equation}
with $\int_{\re} e^{U(t)} dt = 2\sqrt{2}$.

In \cite{Grossi(JDE)}, M. Grossi considered the asymptotic behavior as $p \to \infty$ of radial solutions $u_p$ of the problem
\[
	\begin{cases}
	-\Delta u = u^p, \quad &\text{in} \ A \subset \re^N, \\
	u > 0, \quad &\text{in} \ A, \\
	u = 0, \quad &\text{on} \ \pd A 
	\end{cases}
\]
on an annulus $A = \{ a < |x| < b \}$ in $\re^N$, $N \ge 2$.
He proved that the same asymptotic formulae \eqref{p_large(N=2)}, \eqref{p_large(N>2)} hold true for $u_p$ according to the case $N=2$ or $N \ge 3$.  
Note that the limit functions in \eqref{p_large(N=2)}, \eqref{p_large(N>2)} are the constant multiple of $G_{a,b}(|x|, r_0)$,
where $G_{a,b}(r,s)$ is the Green's function of the operator $-\frac{d^2}{dr^2} - \frac{N-1}{r} \frac{d}{dr}$ on $r \in (a, b)$
defined by
\begin{align}
\label{Green_ab}
	&G_{a,b}(r,s) = \frac{s^{N-1}}{(N-2)(a^{2-N}-b^{2-N})} \begin{cases} 
			(b^{2-N}-s^{2-N})(r^{2-N}-a^{2-N}), \quad \text{for} \ a \le r \le s, \\
			(s^{2-N}-b^{2-N})(b^{2-N}-r^{2-N}), \quad \text{for} \ s < r \le b,
			\end{cases} (N \ge 3), \notag \\
	&G_{a,b}(r,s) = \frac{s}{\log b - \log a} \begin{cases} 
			(\log r - \log a)(\log b - \log s), \quad \text{for} \ a \le r \le s, \\
			(\log s - \log a)(\log b - \log r), \quad \text{for} \ s < r \le b, \\
			\end{cases} (N = 2),
\end{align}
and $r_0 \in (a, b)$ is defined by
\begin{equation}
\label{r_0}
	r_0 = \begin{cases}
	&\( \frac{a^{2-N}+b^{2-N}}{2}\)^{\frac{1}{2-N}}, \quad (N \ge 3), \\
	&\sqrt{ab}, \quad (N = 2).
	\end{cases}
\end{equation}
Furthermore he proved the local convergence result that 
\[
	\tilde{u_p}(t) \to U(t) = \log \frac{4 e^{\sqrt{2}t}}{(1+e^{\sqrt{2}t})^2} \quad \text{in} \ C^1_{loc}(\re), 
\]
where $\eps_p > 0$ and $\tilde{u_p}$ are defined as
\[
	\begin{cases}
	&p \eps_p^2 \| u_p \|_{L^{\infty}(A)}^{p-1} \equiv 1, \\
	&\tilde{u_p}(t) = \frac{p}{\| u_p \|_{L^{\infty}(A)}} \left\{ u_p(\eps_p t + r_p ) - \| u_p \|_{L^{\infty}(A)} \right\}, \\
	&u_p(r_p) = \| u_p \|_{L^{\infty}(A)}, \quad r_p \in (a,b), \\
	&t \in (\frac{a-r_p}{\eps_p}, \frac{b-r_p}{\eps_p}).
	\end{cases}
\]

\vspace{1em}
Theorems \ref{Thm:N=2(global)}-\ref{Thm:N>2(local)} are strongly related with results in \cite{Grossi(JDE)} and the proofs are much simpler.
The key ingredients are the explicit norm computations and the asymptotic behaviors of the unique solution $W_p$ of 1-D Emden equation \eqref{Eq:W_p} as $p \to \infty$ or $p \searrow 1$.
Based on these, we prove theorems via the well-known transformations.

\section{Asymptotic behaviors of $W_p$.}

For any $p > 1$ and $\bar{a}, \bar{b} \in \re$ with $\bar{a} < \bar{b}$, let $W_p$ denote the unique function satisfying
\begin{equation}
\label{Eq:W_p}
	\begin{cases}
	-W_p''(s) = W_p^p(s), \quad s \in I = (\bar{a}, \bar{b}), \\
	W_p(s) > 0, \quad s \in I, \\
	W_p(\bar{a}) = W_p(\bar{b}) = 0.
	\end{cases}
\end{equation}
By \cite{GNN}, $W_p$ is symmetric with respect to $s = s_0 = \frac{\bar{a}+\bar{b}}{2}$ and
$\| W_p \|_{L^{\infty}(\bar{a}, \bar{b})} = W_p(\frac{\bar{a}+\bar{b}}{2})$.
Also $W_p$ is strictly monotone increasing on $(\bar{a}, s_0)$ and strictly monotone decreasing on $(s_0, \bar{b})$.

\vspace{1em}
The following explicit norm computations are done by T. Shibata \cite{Shibata(JMAA)}. 
For the readers convenience, we give the proof here.

\begin{proposition}(\cite{Shibata(JMAA)})
\label{Prop:W_p(norm)}
For a given $p > 1$, let $W_p$ be the unique solution of the problem \eqref{Eq:W_p}.
Then
\begin{align}
\label{xi_p}
	&\xi_p = \| W_p \|_{L^{\infty}(\bar{a}, \bar{b})} = \(\frac{2}{\bar{b}-\bar{a}} \sqrt{\frac{p+1}{2}} L_p \)^{\frac{2}{p-1}}, \\ 
\label{L^qW_p}
	&\| W_p \|_{L^q(\bar{a}, \bar{b})}^q = \sqrt{\frac{2}{p+1}} \xi_p^{\frac{2q-p+1}{2}} B\(\frac{q+1}{p+1},\frac{1}{2}\), \quad (q > 0)
\end{align}
where $L_p$ is defined in \eqref{Def:L_p} and $B(x,y) = \int_0^1 t^{x-1}(1-t)^{y-1} dt$ denotes the Beta function.
\end{proposition}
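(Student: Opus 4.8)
The plan is to use the first integral of the autonomous equation $-W_p'' = W_p^p$ together with the symmetry and monotonicity of $W_p$ already recorded above. Multiplying the equation by $W_p'$ and integrating once yields the conservation law
\[
	\tfrac{1}{2}\bigl(W_p'(s)\bigr)^2 + \tfrac{1}{p+1}W_p(s)^{p+1} \equiv \tfrac{1}{p+1}\xi_p^{p+1},
\]
the value of the constant being read off at $s = s_0 = \frac{\bar a + \bar b}{2}$, where $W_p'(s_0) = 0$ and $W_p(s_0) = \xi_p$. Hence $(W_p')^2 = \frac{2}{p+1}\bigl(\xi_p^{p+1} - W_p^{p+1}\bigr)$, and on $(\bar a, s_0)$, where $W_p$ is strictly increasing, we take the positive square root.

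First I would prove \eqref{xi_p}. Separating variables in $W_p' = \sqrt{\frac{2}{p+1}}\sqrt{\xi_p^{p+1} - W_p^{p+1}}$ and integrating over $s \in (\bar a, s_0)$, along which $W_p$ increases from $0$ to $\xi_p$, gives
\[
	\int_0^{\xi_p}\frac{dw}{\sqrt{\xi_p^{p+1} - w^{p+1}}} = \sqrt{\frac{2}{p+1}}\;\frac{\bar b - \bar a}{2}.
\]
The rescaling $w = \xi_p t$ turns the left-hand side into $\xi_p^{\frac{1-p}{2}}L_p$ with $L_p$ as in \eqref{Def:L_p}, so solving for $\xi_p$ gives \eqref{xi_p}. (The integrand is integrable at the endpoint $w = \xi_p$ since $\xi_p^{p+1} - w^{p+1} \sim (p+1)\xi_p^{p}(\xi_p - w)$ there, so all the integrals below converge.)

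Next I would prove \eqref{L^qW_p}. By symmetry of $W_p$ about $s_0$, write $\|W_p\|_{L^q(\bar a,\bar b)}^q = 2\int_{\bar a}^{s_0} W_p(s)^q\,ds$ and apply the same substitution $s \mapsto w = W_p(s)$ on $(\bar a, s_0)$, using $ds = \frac{dw}{W_p'} = \sqrt{\frac{p+1}{2}}\,\frac{dw}{\sqrt{\xi_p^{p+1} - w^{p+1}}}$. After $w = \xi_p t$ this becomes $2\sqrt{\frac{p+1}{2}}\,\xi_p^{\frac{2q-p+1}{2}}\int_0^1 \frac{t^q}{\sqrt{1 - t^{p+1}}}\,dt$, and the further change of variable $\tau = t^{p+1}$ identifies the remaining integral with $\frac{1}{p+1}B\!\left(\frac{q+1}{p+1},\frac{1}{2}\right)$. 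Collecting the prefactors via $2\sqrt{\frac{p+1}{2}}\cdot\frac{1}{p+1} = \sqrt{\frac{2}{p+1}}$ yields \eqref{L^qW_p}.

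I do not expect a genuine obstacle: existence, uniqueness, symmetry and monotonicity of $W_p$ are quoted from \cite{GNN}, so the argument is an elementary ODE computation. The only points requiring care are the bookkeeping of the exponents of $\xi_p$ (note $q + 1 - \frac{p+1}{2} = \frac{2q - p + 1}{2}$) and the justification of the changes of variables, which is immediate in view of the mild, integrable singularity of the integrand at the turning point $w = \xi_p$.
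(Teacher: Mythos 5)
Your proposal is correct and follows essentially the same route as the paper: the first integral $(W_p')^2=\frac{2}{p+1}(\xi_p^{p+1}-W_p^{p+1})$, the time-map identity over $(\bar a,s_0)$ for \eqref{xi_p}, and the same change of variables plus Beta-function identification for \eqref{L^qW_p}. The only (welcome) additions are your explicit checks of integrability at the turning point and of the substitution $\tau=t^{p+1}$, which the paper leaves implicit.
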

Especially by \eqref{L^qW_p} with $q = p$, we see
\begin{equation}
\label{L^pW_p}
	\| W_p \|_{L^p(\bar{a}, \bar{b})}^p = \(\frac{2}{\bar{b}-\bar{a}} L_p \)^{\frac{p+1}{p-1}} \(\frac{p+1}{2}\)^{\frac{1}{p-1}} B(1, \frac{1}{2}) \to  \frac{2\pi}{\bar{b}-\bar{a}} 
\end{equation}
as $p \to \infty$.

\begin{proof}
We use a time-map method to compute $\| W_p \|_{L^q(\bar{a}, \bar{b})}$.
 
Multiply the equation by $W_p'(s)$, we have
\begin{align*}
	\left\{-\frac{1}{2}(W_p'(s))^{2}-\frac{1}{p+1}W_p^{p+1}(s)\right\}' = 0,
\end{align*}
thus 
\begin{align*}
	-\frac{1}{2}(W_p'(s))^{2}-\frac{1}{p+1}W_p^{p+1}(s) = \text{constant} = 0-\frac{1}{p+1} \xi_p^{p+1}.
\end{align*}
Therefore, we have
\begin{equation}
\label{Wprime}
	W_p'(s) = \sqrt{\frac{2}{p+1}(\xi_p^{p+1} - W_p^{p+1}(s))}.
\end{equation}
By \eqref{Wprime} and the change of variables $t = W_p(s)$, we compute
\begin{align*}
	\frac{\bar{b}-\bar{a}}{2} = \int_{\bar{a}}^{\frac{\bar{a}+\bar{b}}{2}} 1 ds 
	&=\int_{\bar{a}}^{\frac{\bar{a}+\bar{b}}{2}} \frac{W_p'(s)ds}{\sqrt{\frac{2}{p+1}(\xi_p^{p+1} - W_p^{p+1}(s))}} \\
	&=\sqrt{\frac{p+1}{2}}\int_{0}^{\xi_p}\frac{dt}{\sqrt{\xi_p^{p+1} - t^{p+1}}} \\
	&=\sqrt{\frac{p+1}{2}}\xi_p^{\frac{1-p}{2}} \int_0^1 \frac{d\tau}{\sqrt{1 - \tau^{p+1}}}.
\end{align*}
From this, we obtain
\[
	\xi_p = \| W_p \|_{L^{\infty}(\bar{a}, \bar{b})} = \( \frac{2}{\bar{b}-\bar{a}} \sqrt{\frac{p+1}{2}} L_p \)^{\frac{2}{p-1}}.
\]
Next, by the formula $\xi_p$ and the change of variables $t = W_p(s)$ again, we see
\begin{align*}
	\| W_p \|_{L^q(\bar{a}, \bar{b})}^q &= 2\int_{\bar{a}}^{\frac{\bar{a}+\bar{b}}{2}} |W_p(s)|^q ds \\
	&=2\int_{\bar{a}}^{\frac{\bar{a}+\bar{b}}{2}} W_p^{q}(s)\frac{W_p'(s)ds}{\sqrt{\frac{2}{p+1}(\xi_p^{p+1} - W_p^{p+1}(s))}}\\
	&=\sqrt{2(p+1)}\int_{0}^{\xi_p}\frac{t^q dt}{\sqrt{\xi_p^{p+1} - t^{p+1}}} \\
	&=\sqrt{2(p+1)}\xi_p^{\frac{2q-p+1}{2}}\int_{0}^{1}\frac{\tau^q dt}{\sqrt{1-\tau^{p+1}}} \\
	&=\sqrt{\frac{2}{p+1}} \xi_p^{\frac{2q-p+1}{2}} B\(\frac{q+1}{p+1},\frac{1}{2}\),
\end{align*}
which proves the formula of $\| W_p \|_{L^q(\bar{a}, \bar{b})}$.
\end{proof}

The following two propositions were essentially proven by M. Grossi \cite{Grossi(JDE)}; see Theorem 1.1, and Proposition 3.2 in \cite{Grossi(JDE)}. 
Though Theorem 1.1 (1.3) in \cite{Grossi(JDE)}, $N \ge 3$ is assumed but the proof there works for the case $N=1$.
Also when $N=1$, $\bar{a} > 0$ is not needed.
We give proofs here for the sake of the convenience of the readers.

\begin{proposition}
\label{Prop:W_p(global)}
Let $W_p$ be the unique solution of the problem \eqref{Eq:W_p}.
Then
\begin{align*}
	W_p(s) \to &\(\frac{4}{\bar{b}-\bar{a}}\) G\(s,\frac{\bar{a}+\bar{b}}{2}\)
	= \frac{2}{\bar{b}-\bar{a}} 
	\begin{cases} 
	(s-\bar{a}) \quad \bar{a} \le s \le \frac{\bar{a}+\bar{b}}{2} \\
	(\bar{b}-s) \quad \frac{\bar{a}+\bar{b}}{2} \le s \le \bar{b}
	\end{cases} \quad \text{in} \ C^0[\bar{a}, \bar{b}]
\end{align*}
as $p \to \infty$ holds true.
Here $G(s,t)$ is the Green's function of the operator $-\frac{d^2}{ds^2}$ with the Dirichlet boundary condition on $[\bar{a}, \bar{b}]$:
\begin{equation}
\label{Green}
	G(s,t) = \frac{1}{\bar{b}-\bar{a}} 
	\begin{cases}
		(s-\bar{a})(\bar{b}-t), \quad \bar{a} \le s \le t \le \bar{b}, \\
		(\bar{b}-s)(t-\bar{a}), \quad \bar{a} \le t \le s \le \bar{b}.
	\end{cases}
\end{equation}
\end{proposition}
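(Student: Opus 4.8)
The plan is to pass to the limit directly in the time-map identity obtained in the proof of Proposition~\ref{Prop:W_p(norm)}; this reduces everything to elementary properties of the model integral $F_p(m) := \int_0^m (1-\rho^{p+1})^{-1/2}\,d\rho$, $m\in[0,1]$, which satisfies $F_p(1)=L_p$. First I would fix $s_0=\frac{\bar a+\bar b}{2}$ and work on the increasing branch $s\in[\bar a,s_0]$: integrating the first integral \eqref{Wprime} and substituting $\tau=W_p(\sigma)$ gives $\int_0^{W_p(s)}(\xi_p^{p+1}-\tau^{p+1})^{-1/2}\,d\tau=\sqrt{\tfrac{2}{p+1}}\,(s-\bar a)$, and then rescaling $\tau=\xi_p\rho$ and inserting the formula \eqref{xi_p} for $\xi_p^{(p-1)/2}$ collapses this to
\[
	F_p\!\left(\frac{W_p(s)}{\xi_p}\right)=\frac{2L_p}{\bar b-\bar a}\,(s-\bar a),\qquad s\in[\bar a,s_0];
\]
the mirror identity on $[s_0,\bar b]$ (with $s-\bar a$ replaced by $\bar b-s$) follows from the symmetry of $W_p$ about $s_0$ recalled above.

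Next I would collect three limits. (i) $\xi_p\to1$, because $\log\xi_p=\frac{2}{p-1}\log\!\big(\frac{2}{\bar b-\bar a}\sqrt{\tfrac{p+1}{2}}\,L_p\big)\to0$, $L_p$ being bounded. (ii) $L_p=F_p(1)\to1$, by dominated convergence in \eqref{Def:L_p} with majorant $(1-s^2)^{-1/2}$ (or from $L_p=\frac{1}{p+1}B(\frac{1}{p+1},\frac12)$). (iii) $F_p\to\mathrm{id}$ uniformly on $[0,1]$: each $F_p$ is continuous and increasing with $F_p(0)=0$, $F_p(m)\to m$ pointwise for $m\in[0,1)$ by dominated convergence and $F_p(1)\to1$ by (ii), and pointwise convergence of a sequence of monotone functions to a continuous limit on a compact interval is automatically uniform.

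Then I would invert. Setting $m_p(s):=W_p(s)/\xi_p\in[0,1]$, the displayed identity reads $m_p(s)=F_p^{-1}\!\big(\tfrac{2L_p}{\bar b-\bar a}(s-\bar a)\big)$ on $[\bar a,s_0]$. Writing $x=F_p^{-1}(y)$ shows $\sup_y|F_p^{-1}(y)-y|\le\sup_{x\in[0,1]}|x-F_p(x)|\to0$ by (iii), while $\tfrac{2L_p}{\bar b-\bar a}(s-\bar a)\to\tfrac{2}{\bar b-\bar a}(s-\bar a)$ uniformly on $[\bar a,s_0]$ by (ii); hence $m_p(s)\to\tfrac{2}{\bar b-\bar a}(s-\bar a)$ uniformly there, and multiplying by $\xi_p\to1$ gives $W_p(s)\to\tfrac{2}{\bar b-\bar a}(s-\bar a)$ in $C^0[\bar a,s_0]$. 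By symmetry $W_p(s)\to\tfrac{2}{\bar b-\bar a}(\bar b-s)$ in $C^0[s_0,\bar b]$, and since $\tfrac{2}{\bar b-\bar a}\min(s-\bar a,\bar b-s)=\tfrac{4}{\bar b-\bar a}\,G\!\big(s,\tfrac{\bar a+\bar b}{2}\big)$ by \eqref{Green}, this is exactly the claimed convergence in $C^0[\bar a,\bar b]$.

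The step I expect to be the main obstacle is (iii) and its use in the inversion: all estimates must be controlled uniformly up to the peak $s=s_0$, where $m_p(s)\to1$ and the integrand $(1-\rho^{p+1})^{-1/2}$ is singular. The singularity is integrable and it is precisely the fact $L_p=F_p(1)\to1$ that neutralizes it, but one has to state carefully the ``monotone pointwise $\Rightarrow$ uniform'' principle and the passage from $F_p$ to $F_p^{-1}$ on the moving range $[0,L_p]$; away from the peak the convergence is routine dominated convergence. An alternative would be to use the representation $W_p(s)=\int_{\bar a}^{\bar b}G(s,t)W_p^p(t)\,dt$ and prove $W_p^p\weakto\tfrac{4}{\bar b-\bar a}\delta_{s_0}$ as measures (the total mass $\|W_p\|_{L^p}^p\to\tfrac{4}{\bar b-\bar a}$ is already available from \eqref{L^pW_p}), but verifying the concentration again needs the ODE analysis above, so the time-map route is the more economical one.
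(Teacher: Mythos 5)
Your proposal is correct, but it proves the proposition by a genuinely different route than the paper. The paper's proof is a compactness argument: it first derives the uniform gradient bound $|W_p'(s)|\le \|W_p\|_{L^p}^p=O(1)$ from \eqref{L^pW_p}, extracts a uniformly convergent subsequence by Ascoli--Arzel\`a, and then identifies the limit by showing (via a contradiction argument using monotonicity and the gradient bound) that $W_p(s)<1$ for large $p$ at every $s\ne s_0$, so that $W_p^p\to 0$ away from the peak and the limit is affine on each half-interval with $W(\bar a)=W(\bar b)=0$ and $W(s_0)=\lim\xi_p=1$. You instead integrate the first integral \eqref{Wprime} on the increasing branch to get the exact quadrature identity $F_p(W_p(s)/\xi_p)=\tfrac{2L_p}{\bar b-\bar a}(s-\bar a)$ and pass to the limit in the inverse time map; the only analytic inputs are $L_p\to1$, $\xi_p\to1$, and the Dini/P\'olya-type principle that monotone functions converging pointwise to a continuous limit on a compact interval converge uniformly, which correctly handles the integrable singularity at the peak via $\sup_{y\in[0,L_p]}|F_p^{-1}(y)-y|=\sup_{x\in[0,1]}|x-F_p(x)|$. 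Your route is fully constructive (no subsequence extraction, and it yields an explicit modulus of convergence in terms of $|L_p-1|$, $|\xi_p-1|$ and $\sup|F_p-\mathrm{id}|$), at the price of relying on the one-dimensional integrability of the ODE; the paper's route is less explicit but is the standard blow-up/compactness scheme that generalizes to settings without a first integral. Both correctly reduce the $[s_0,\bar b]$ branch to symmetry, and both match the normalization $\tfrac{4}{\bar b-\bar a}G(s,s_0)=\tfrac{2}{\bar b-\bar a}\min(s-\bar a,\bar b-s)$.
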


\begin{proof}
Put $I = (\bar{a}, \bar{b})$.
For $\bar{a} < r < s_0 = \frac{\bar{a} + \bar{b}}{2}$, we have from \eqref{Eq:W_p} that
\[
	\int_{r}^{s_0} -W_p''(s) ds = \int_{r}^{s_0} W_p^p(s) ds \le \int_{\bar{a}}^{\bar{b}} W_p^p(s) ds = \| W_p \|_{L^p(I)}^p.
\]
Thus we have $W_p'(r) \le \| W_p \|_{L^p(I)}^p$, $(\bar{a} < \forall r < s_0)$.
Similarly, we see $-W_p'(r) \le \| W_p \|_{L^p(I)}^p$, $(s_0 < \forall r < \bar{b})$,
so we obtain
\begin{equation}
\label{AA1}
	|W_p'(s)| \le \| W_p \|_{L^p(I)}^p = O(1) \quad (\bar{a} < \forall s < \bar{b})
\end{equation}
as $p \to \infty$ by \eqref{L^pW_p}.
Then by Ascoli-Arzela Theorem, there exists a function $W: I \to \re$ such that 
\[
	W_p \to W \quad \text{uniformly on} \  \bar{I} = [\bar{a}, \bar{b}] 
\]
as $p \to \infty$ (up to a subsequence).
Since $\| W_p \|_{L^{\infty}(I)} = \xi_p \to 1$ as $p \to \infty$ by Proposition \ref{Prop:W_p(norm)}, we assure that $\| W \|_{L^{\infty}(I)} = W(s_0) = 1$.

Now, we claim that for any $s \in [\bar{a}, \bar{b}]$, $s \ne s_0 = \frac{\bar{a}+\bar{b}}{2}$, there exists $p_0 > 1$ such that
\begin{equation}
\label{claim1}
	W_p(s) < 1 \quad (p > p_0)
\end{equation}
holds true.
We prove the claim for $s < s_0$ first.
Indeed, assume the contrary that there exists $\bar{s} < s_0$ and $p_n \to \infty$ such that $W_{p_n}(\bar{s}) \ge 1$ for any $n \in \N$.
Since $W_{p_n}$ is strictly monotone increasing on $(\bar{s}, s_0)$, we obtain $W_{p_n}(t) > W_{p_n}(\bar{s}) \ge 1$ for any $t \in (\bar{s}, s_0)$.
Then there exists $\eps_0 > 0$ such that $W_{p_n}(t) > 1 + \eps_0$ for any $t \in (\bar{s}, s_0)$, 
which implies that
\[
	W_{p_n}'(\bar{s}) = \int_{\bar{s}}^{s_0} -W_{p_n}''(s) ds = \int_{\bar{s}}^{s_0} W_{p_n}^{p_n}(s) ds \ge (1+\eps_0)^{p_n} (s_0 - \bar{s}) \to +\infty
\]
as $p_n \to \infty$. This contradicts to \eqref{AA1}.
The proof when $s > s_0$ is similar. Therefore, we obtain \eqref{claim1}.

Note that by the equation of $W_p$ and \eqref{claim1}, the convergence $W_p \to W$ actually holds in $C^1([\bar{a}, \bar{b}] \setminus \{ s_0\})$.
Thus letting $p \to \infty$ with \eqref{claim1} in \eqref{Eq:W_p}, we obtain that the limit function satisfies that
\[
	\begin{cases}
	-W''(s) = 0, \quad s \in I = (\bar{a}, \bar{b}) \setminus \{ s_0 \}, \\
	W(\bar{a}) = W(\bar{b}) = 0, \\
	W(s_0) = 1.
	\end{cases}
\]
From this, we have the conclusion that
\[
	W(s) = \frac{2}{\bar{b}-\bar{a}} 
	\begin{cases}
		(s-\bar{a}), \quad (\bar{a} \le s < s_0) \\
		(\bar{b}-s), \quad (s_0 < s \le \bar{b})
	\end{cases}
	= \(\frac{4}{\bar{b}-\bar{a}}\) G(s,s_0),
\]
where $G(s,t)$ is the Green's function of the operator $-\frac{d^2}{ds^2}$ with the Dirichlet boundary condition on $[\bar{a}, \bar{b}]$ defined in \eqref{Green}.
\end{proof}

\begin{proposition}
\label{Prop:W_p(local)}
Let $W_p$ be the unique solution of the problem \eqref{Eq:W_p}.
Define $\eps_p > 0$ and $\tilde{W}_p$ be such that 
\begin{equation}
\label{Wtilde}
	\begin{cases}
	&p \eps_p^2 \| W_p \|_{L^{\infty}(\bar{a}, \bar{b})}^{p-1} \equiv 1, \\
	&\tilde{W}_p(t) = \frac{p}{\| W_p \|_{L^{\infty}(\bar{a}, \bar{b})}} \left\{ W_p(\eps_p t + s_0) - W_p(s_0) \right\}, \\
	&t \in I_p = (\frac{\bar{a}-s_0}{\eps_p}, \frac{\bar{b}-s_0}{\eps_p}). 
	\end{cases}
\end{equation}
Then $\eps_p \to 0$ and
\[
	\tilde{W}_p(t) \to U(t) = \log \frac{4 e^{\sqrt{2}t}}{(1+e^{\sqrt{2}t})^2} \quad \text{in} \ C^1_{loc}(\re)
\]
as $p \to \infty$.
\end{proposition}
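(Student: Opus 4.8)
The plan is to rescale the Emden problem \eqref{Eq:W_p} about its maximum point $s_0=\frac{\bar a+\bar b}{2}$, produce a $p$-independent bound on the rescaled functions, extract a $C^1_{loc}$ limit by compactness, and identify that limit with $U$ via the uniqueness noted for \eqref{Liouville(whole)}.

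First I would verify $\eps_p\to 0$. By Proposition \ref{Prop:W_p(norm)},
\[
	\xi_p^{p-1} = \(\frac{2}{\bar b-\bar a}\sqrt{\frac{p+1}{2}}\,L_p\)^{2} = \frac{2(p+1)L_p^{2}}{(\bar b-\bar a)^{2}},
\]
and since $1-s^{p+1}\ge 1-s^{2}$ on $[0,1]$ for $p\ge 1$, dominated convergence gives $L_p\to\int_0^1 ds=1$ as $p\to\infty$; hence $\eps_p^{2}=1/(p\,\xi_p^{p-1})=(\bar b-\bar a)^{2}/(2p(p+1)L_p^{2})\to 0$, so the intervals $I_p=(-\tfrac{\bar b-\bar a}{2\eps_p},\tfrac{\bar b-\bar a}{2\eps_p})$ eventually contain every compact subset of $\re$. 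Writing $W_p(\eps_p t+s_0)=\xi_p\bigl(1+\tfrac{\tilde W_p(t)}{p}\bigr)$ and using $-W_p''=W_p^{p}$ together with the normalization $p\eps_p^{2}\xi_p^{p-1}\equiv 1$, a direct computation turns \eqref{Eq:W_p} into
\[
	-\tilde W_p''(t)=\(1+\frac{\tilde W_p(t)}{p}\)^{p},\qquad \tilde W_p(0)=\tilde W_p'(0)=0,\qquad t\in I_p,
\]
the initial data coming from $W_p(s_0)=\xi_p$ and $W_p'(s_0)=0$ (the latter from \eqref{Wprime} since $s_0$ is the maximum point). Moreover $0\le W_p(\eps_p t+s_0)\le\xi_p$ on $I_p$ forces $-p\le\tilde W_p\le 0$ there, so the right-hand side above lies in $[0,1]$.

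The compactness is then immediate: $0\le(1+\tilde W_p/p)^{p}\le 1$ gives $|\tilde W_p''|\le 1$ on all of $I_p$, hence, integrating twice from $0$ with the vanishing initial data, $|\tilde W_p'(t)|\le|t|$ and $|\tilde W_p(t)|\le t^{2}/2$ on $I_p$. On every $[-R,R]$ the family $\{\tilde W_p\}$ is thus bounded in $C^{1,1}$ with constants independent of $p$, so by the Ascoli--Arzel\`a theorem a subsequence converges in $C^1_{loc}(\re)$ to some $\tilde W$ with $\tilde W(0)=\tilde W'(0)=0$ and $\tilde W\le 0$. To pass to the limit in the nonlinearity, observe that on $[-R,R]$ one has $\tilde W_p/p\in[-R^{2}/(2p),0]$, whence $p\log(1+\tilde W_p/p)=\tilde W_p+O(R^{4}/p)\to\tilde W$ uniformly on $[-R,R]$; therefore $(1+\tilde W_p/p)^{p}\to e^{\tilde W}$ uniformly on $[-R,R]$. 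Passing to the limit in the integrated equation $\tilde W_p(t)=-\int_0^{t}\!\int_0^{\sigma}(1+\tilde W_p(s)/p)^{p}\,ds\,d\sigma$ yields $-\tilde W''=e^{\tilde W}$ with $\tilde W(0)=\tilde W'(0)=0$, so $\tilde W=U$ by the uniqueness recorded for \eqref{Liouville(whole)}. Since the limit is independent of the chosen subsequence, the whole family converges: $\tilde W_p\to U$ in $C^1_{loc}(\re)$.

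I do not anticipate a real obstacle: this is a standard rescaling argument, and the one point that often causes trouble in such blow-up analyses — deterioration of the a priori estimates on the expanding domains $I_p$ — does not arise here, because the bound $|\tilde W_p''|\le 1$ holds on the whole of $I_p$ and $0\in I_p$, so all estimates propagate freely. The only genuine inputs are the explicit norm identity of Proposition \ref{Prop:W_p(norm)} (used only to get $\eps_p\to 0$ and $I_p\uparrow\re$) and the elementary limit $(1+\zeta/p)^{p}\to e^{\zeta}$, uniform on bounded $\zeta$-sets.
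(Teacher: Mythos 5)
Your proof is correct, and the overall skeleton (rescale about $s_0$, extract a $C^1_{loc}$ limit by Ascoli--Arzel\`a, pass to the limit in the equation, identify the limit via uniqueness for \eqref{Liouville(whole)}) matches the paper's. The one step where you genuinely diverge is the compactness estimate. The paper first derives the global gradient bound $|W_p'(s)| \le \| W_p \|_{L^p(I)}^p = O(1)$ (its \eqref{AA1}, which rests on the explicit norm computation \eqref{L^pW_p}) and then converts it, via the definition of $\eps_p$ and the lower bound $\xi_p \ge C p^{1/(p-1)}$, into a bound $|\tilde W_p'(t)| \le C$ that is uniform over all of $I_p$. You instead observe that $0 \le W_p \le \xi_p$ forces $1 + \tilde W_p/p = W_p/\xi_p \in [0,1]$, hence $|\tilde W_p''| \le 1$ on $I_p$, and integrate from the zero initial data at $t=0$ to get $|\tilde W_p'(t)| \le |t|$, $|\tilde W_p(t)| \le t^2/2$. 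Your bounds are only locally uniform, but that is all Ascoli--Arzel\`a needs on compact sets, and in exchange you bypass \eqref{AA1} and \eqref{L^pW_p} entirely, using Proposition \ref{Prop:W_p(norm)} solely to show $\eps_p \to 0$ (for which your observation $L_p \to 1$ by dominated convergence is a correct, slightly sharper input than the paper's $L_p > 1$). You are also more careful than the paper about justifying the convergence of the nonlinearity, writing $p\log(1+\tilde W_p/p) = \tilde W_p + O(R^4/p)$ uniformly on $[-R,R]$; the paper leaves this implicit. In short: same architecture, but a more elementary and self-contained route to equicontinuity.
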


\begin{proof}
By Proposition \ref{Prop:W_p(norm)} \eqref{xi_p} and $L_p > 1$ for any $p > 1$, we observe
\[
	\| W_p \|_{L^{\infty}(\bar{a}, \bar{b})}^{p-1} = \xi_p^{p-1} = \(\frac{2}{\bar{b}-\bar{a}} \sqrt{\frac{p+1}{2}} L_p \)^2 > \(\frac{2}{\bar{b}-\bar{a}}\)^2 \( \frac{p+1}{2} \).
\]
Thus $\xi_p \ge C p^{\frac{1}{p-1}}$ and $\eps_p = \frac{1}{\sqrt{p \xi_p^{p-1}}} \le C' (\frac{1}{p})$ for some $C, C' > 0$. 
Then $\eps_p \to 0$ and ``$I_p \to \re$" as $p \to \infty$.
Direct computation shows that $\tilde{W}_p$ solves the equation
\begin{equation}
\label{Eq:Wtilde}
	\begin{cases}
	&-\tilde{W}_p''(t) = \( 1 + \frac{\tilde{W}_p(t)}{p} \)^p, \\ 
	&t \in I_p = (\frac{\bar{a}-s_0}{\eps_p}, \frac{\bar{b}-s_0}{\eps_p}) =: (\bar{a}_p, \bar{b}_p), \\ 
	&\tilde{W}_p(\bar{a}_p) = W(\bar{b}_p) = 0, \\
	&\tilde{W}_p(0) = 0, \tilde{W}'_p(0) = 0. 
	\end{cases}
\end{equation}
Also by \eqref{AA1} and the definition of $\eps_p$, we see
\begin{align*}
	|\tilde{W}_p'(t)| &= \frac{p}{\| W_p \|_{L^{\infty}(\bar{a}, \bar{b})}} \eps_p |W_p'(\eps_p t + s_0)| \\
	&= \frac{p^{\frac{1}{2}}}{\| W_p \|^{\frac{p+1}{2}}_{L^{\infty}(\bar{a}, \bar{b})}} |W_p'(s)| \Big|_{s = \eps_p t + s_0} \\
	&\le  C \frac{p^{\frac{1}{2}}}{(p^{\frac{1}{p-1}})^{\frac{p+1}{2}}} = C p^{-\frac{1}{p-1}} = O(1)
\end{align*}
for any $t \in I_p$ as $p \to \infty$.
Thus by Ascoli-Arzela Theorem, we assure that there exists a function $U: \re \to \re$ such that
\[
	\tilde{W}_p \to U \quad \text{uniformly on compact sets on} \ \re.
\]
By using the equation of $\tilde{W}_p$, we obtain the uniform boundedness of $\tilde{W}_p''$ in $p$ so this convergence holds actually on $C^1_{loc}(\re)$.
Letting $p \to \infty$ in \eqref{Eq:Wtilde}, we see that the limit function satisfies
\[
	\begin{cases}
	-U''(t) = e^{U(t)}, \quad t \in \re, \\
	U(0) = U'(0) = 0.
	\end{cases}
\]
By the uniqueness of the initial value problem of the second order ODE, we conclude that 
\[
	U(t) = \log \frac{4 e^{\sqrt{2}t}}{(1+e^{\sqrt{2}t})^2}.
\]
\end{proof}

Next proposition concerns the behavior of $W_p$ as $p \searrow 1$ and is proven in \cite{CSLin}.
In \cite{CSLin}, the asymptotic behavior as $p \searrow 1$ of solutions of
\[
	\begin{cases}
	-\Delta u = u^p \quad \text{in} \ \Omega \subset \re^N, \\
	u > 0 \quad \text{in} \ \Omega \subset \re^N, \\
	u = 0 \quad \text{on} \ \pd\Omega,
	\end{cases}
\]
is considered, where $\Omega$ is a bounded convex domain. 
Here we give a simple proof of the fact in one-dimensional case.

\begin{proposition}
\label{Prop:W_p(pto1)}
Let $W_p$ be the unique solution of the problem \eqref{Eq:W_p}.
Then
\begin{align*}
	W_p(s) &= \| W_p \|_{L^{\infty}(\bar{a}, \bar{b})} \left[ \sin \( \frac{\pi(s - \bar{a})}{\bar{b}-\bar{a}} \) + o_p(1) \right] \\
	&= \(\frac{2}{\bar{b}-\bar{a}}\)^{\frac{2}{p-1}} \(\frac{p+1}{2}\)^{\frac{1}{p-1}} L_p^{\frac{2}{p-1}} 
\left[ \sin \( \frac{\pi(s - \bar{a})}{\bar{b}-\bar{a}} \) + o_p(1) \right]
\end{align*}
in $C^1([\bar{a}, \bar{b}])$ as $p \searrow 1$, where $o_p(1) \to 0$ in $C^1([\bar{a}, \bar{b}])$ as $p \searrow 1$.
\end{proposition}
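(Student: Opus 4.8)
The plan is to normalize $W_p$ by its sup norm and to recognize the limit problem as the Dirichlet eigenvalue problem at the \emph{principal} eigenvalue. Set $\xi_p=\|W_p\|_{L^{\infty}(\bar a,\bar b)}$ and $V_p:=W_p/\xi_p$, so that $0\le V_p\le 1$, $V_p(s_0)=\|V_p\|_{L^{\infty}(\bar a,\bar b)}=1$ with $s_0=\tfrac{\bar a+\bar b}{2}$, $V_p(\bar a)=V_p(\bar b)=0$, $V_p$ is symmetric about $s_0$, increasing on $(\bar a,s_0)$, and $V_p$ solves $-V_p''(s)=\xi_p^{p-1}V_p^p(s)$ on $(\bar a,\bar b)$. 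By Proposition \ref{Prop:W_p(norm)}, formula \eqref{xi_p}, one has the explicit identity $\xi_p^{p-1}=\tfrac{2(p+1)}{(\bar b-\bar a)^2}L_p^2$; since $L_p=\int_0^1(1-s^{p+1})^{-1/2}\,ds$ increases to $\int_0^1(1-s^2)^{-1/2}\,ds=\tfrac{\pi}{2}$ as $p\searrow 1$ (monotone convergence), this gives $\xi_p^{p-1}\to\lambda_1:=\big(\tfrac{\pi}{\bar b-\bar a}\big)^2$, the first Dirichlet eigenvalue of $-d^2/ds^2$ on $(\bar a,\bar b)$.

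Next I would derive uniform bounds. Multiplying the equation for $V_p$ by $V_p'$ and integrating (the first-integral computation already used in the proof of Proposition \ref{Prop:W_p(norm)}) gives $\tfrac12(V_p')^2+\tfrac{\xi_p^{p-1}}{p+1}V_p^{p+1}\equiv\tfrac{\xi_p^{p-1}}{p+1}$, hence $|V_p'|\le\big(\tfrac{2\xi_p^{p-1}}{p+1}\big)^{1/2}=\tfrac{2L_p}{\bar b-\bar a}\le\tfrac{\pi}{\bar b-\bar a}$; together with $0\le V_p\le 1$ and $|V_p''|=\xi_p^{p-1}V_p^p\le\xi_p^{p-1}=O(1)$, the family $\{V_p\}$ is bounded in $C^2([\bar a,\bar b])$, hence precompact in $C^1([\bar a,\bar b])$. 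Take any sequence $p_n\searrow 1$; after passing to a subsequence, $V_{p_n}\to V$ in $C^1([\bar a,\bar b])$. Since $0\le V_{p_n}\le 1$ and $(t,q)\mapsto t^q$ is uniformly continuous on $[0,1]\times[1,2]$, one checks $V_{p_n}^{p_n}\to V$ uniformly; combined with $\xi_{p_n}^{p_n-1}\to\lambda_1$ this gives $V_{p_n}''\to-\lambda_1 V$ uniformly, so in fact $V_{p_n}\to V$ in $C^2([\bar a,\bar b])$ and the limit satisfies $-V''=\lambda_1 V$ on $(\bar a,\bar b)$, $V(\bar a)=V(\bar b)=0$, $V\ge 0$, $V(s_0)=1$.

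It remains to solve the limit problem: the general solution of $-V''=\lambda_1 V$ vanishing at $s=\bar a$ is $V(s)=A\sin\!\big(\tfrac{\pi(s-\bar a)}{\bar b-\bar a}\big)$ (the condition at $s=\bar b$ being automatic since $\lambda_1$ is exactly the principal eigenvalue), and $\|V\|_{L^\infty}=|A|=1$ together with $V\ge 0$ forces $A=1$, i.e. $V(s)=\sin\!\big(\tfrac{\pi(s-\bar a)}{\bar b-\bar a}\big)$. As this limit is independent of the chosen subsequence, the whole family converges: $V_p\to\sin\!\big(\tfrac{\pi(\cdot-\bar a)}{\bar b-\bar a}\big)$ in $C^1([\bar a,\bar b])$ (indeed in $C^2$) as $p\searrow 1$. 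Writing $W_p=\xi_p V_p$ and inserting $\xi_p=\big(\tfrac{2}{\bar b-\bar a}\big)^{\frac{2}{p-1}}\big(\tfrac{p+1}{2}\big)^{\frac{1}{p-1}}L_p^{\frac{2}{p-1}}$ from \eqref{xi_p} yields both displayed formulas, with $o_p(1):=V_p-\sin\!\big(\tfrac{\pi(\cdot-\bar a)}{\bar b-\bar a}\big)\to 0$ in $C^1([\bar a,\bar b])$.

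The one place that needs care — rather than a genuine obstacle — is the passage to the limit in the equation: one must know that $\xi_p^{p-1}$ converges (this is exactly where the explicit norm formula of Proposition \ref{Prop:W_p(norm)} and $L_p\to\tfrac{\pi}{2}$ enter) and that the nonlinearity $V_p^p$ converges uniformly to $V$ on the range $[0,1]$; the bootstrap from $C^1$ to $C^2$ convergence, hence the identification of the $C^1$-limit, is then automatic in this one-dimensional setting. Alternatively, one can avoid compactness altogether: the first integral on $(\bar a,s_0)$ gives $\int_0^{V_p(s)}(1-t^{p+1})^{-1/2}\,dt=\tfrac{2L_p}{\bar b-\bar a}(s-\bar a)$, and since $F_p(v):=\int_0^v(1-t^{p+1})^{-1/2}\,dt$ satisfies $0\le\arcsin v-F_p(v)\le\tfrac{\pi}{2}-L_p$ for all $v\in[0,1]$, inverting yields the explicit estimate $\big\|V_p-\sin\!\big(\tfrac{\pi(\cdot-\bar a)}{\bar b-\bar a}\big)\big\|_{C^0[\bar a,s_0]}\le\pi-2L_p$, which extends to $[\bar a,\bar b]$ by symmetry, the $C^1$ statement then following from the equation as above.
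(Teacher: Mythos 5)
Your proof is correct and follows essentially the same route as the paper: normalize $V_p=W_p/\xi_p$, use the explicit formula $\xi_p^{p-1}\to(\pi/(\bar b-\bar a))^2$ from Proposition \ref{Prop:W_p(norm)} together with $L_p\to\pi/2$, pass to the limit by Ascoli--Arzel\`a, and identify the limit as the normalized first Dirichlet eigenfunction. The extra quantitative time-map argument you sketch at the end is a nice bonus but not needed.
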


\begin{proof}
Let $w_p = \frac{W_p}{\| W_p \|_{L^{\infty}(\bar{a}, \bar{b})}} = \frac{W_p}{\xi_p}$. 
Then $\| w_p \|_{L^{\infty}(\bar{a}, \bar{b})} = 1$ and $w_p$ solves
\begin{equation}
\label{Eq:w_p}
	\begin{cases}
	-w_p''(s) = \xi_p^{p-1} w_p^p(s), \quad s \in I = (\bar{a}, \bar{b}), \\
	w_p(s) > 0, \quad s \in I, \\
	w_p(\bar{a}) = w_p(\bar{b}) = 0.
	\end{cases}
\end{equation}
By Proposition \ref{Prop:W_p(norm)} \eqref{xi_p}, we see
\[
	\xi_p^{p-1} = \(\frac{2}{\bar{b}-\bar{a}} \)^2 L_p^2 \( \frac{p+1}{2} \) \to \( \frac{\pi}{\bar{b}-\bar{a}} \)^2
\]
as $ p \searrow 1$. Note that $L_1 = \int_0^1 \frac{ds}{\sqrt{1-s^2}} = \frac{\pi}{2}$.
Thus $w_p''$ is uniformly bounded on $[\bar{a}, \bar{b}]$ and the Ascoli-Arzela Theorem implies that there exists a function $w: [\bar{a}, \bar{b}] \to \re$ such that
$w_p \to w$ as $p \searrow 1$ in $C^1([\bar{a}, \bar{b}])$.
Taking a limit $p \searrow 1$ in \eqref{Eq:w_p}, we see $w$ satisfies 
\[
	\begin{cases}
	-w''(s) = \( \frac{\pi}{\bar{b}-\bar{a}} \)^2 w(s), \quad s \in I = (\bar{a}, \bar{b}), \\
	w(s) > 0, \quad s \in I, \\
	w(\bar{a}) = w(\bar{b}) = 0.
	\end{cases}
\]
Thus $w$ is the first eigenfunction of $-\frac{d^2}{ds^2}$ with the Dirichlet boundary condition on $I = (\bar{a}, \bar{b})$,
which leads to
\[
	w(s) = C \sin \(\frac{\pi(s-\bar{a})}{\bar{b}-\bar{a}}\), \quad \text{in} \ C^1([\bar{a}, \bar{b}])
\]
for some $C > 0$.
Since $\| w \|_{L^{\infty}(I)} = C = \lim_{p \searrow 1} \| w_p \|_{L^{\infty}(I)} = 1$, 
we conclude
\[
	\lim_{p \to 1} w_p = \lim_{p \to 1} \frac{W_p(s)}{\xi_p} = \sin \(\frac{\pi(s-\bar{a})}{\bar{b}-\bar{a}}\)
\]
in $C^1([\bar{a}, \bar{b}])$, which ends the proof.
\end{proof}

\section{Proofs of Theorems.}

Theorem \ref{Thm:N=2(global)}-\ref{Thm:N>2(local)} in \S 1 will be proven by combining well-known transformations 
and the asymptotic behaviors of $W_p$ as $p \to \infty$, or $p \searrow 1$ in \S 2.
First we prove Theorem \ref{Thm:N=2(global)}.

\begin{proof}
For a radially symmetric solution $u_p(|x|) = u_p(r)$ to \eqref{Henon(N=2)},
define a new function $v_p$ as 
\begin{equation}
\label{transform(N=2)}
	u_p(r) = v_p(s), \quad s = -\log r, \quad s \in (-\log b, -\log a).
\end{equation}
Then a direct computation shows that $v_p$ solves
\[
	\begin{cases}
	-v_p''(s) = v_p^p(s), \quad s \in (-\log b, -\log a), \\
	v_p(s) > 0, \quad s \in (-\log b, -\log a), \\
	v_p(-\log b) = v_p(-\log a) = 0.
	\end{cases}
\]
By the uniqueness, $v_p \equiv W_p$, where $W_p$ is the unique solution of \eqref{Eq:W_p} with $\bar{a} = -\log b$, $\bar{b} = -\log a$. 
Thus the radial solution $u_p$ to \eqref{Henon(N=2)} is unique for any $p > 1$ and is represented as
\begin{equation}
\label{u_pform(N=2)}
	u_p(r) = W_p(s), \quad s = -\log r, \quad s \in (-\log b, -\log a).
\end{equation}
Applying Proposition \ref{Prop:W_p(global)} to $W_p$ and going back to $u_p$ with $s = -\log |x|$, 
we obtain \eqref{p_large(N=2)}. 
On the other hand, applying Proposition \ref{Prop:W_p(pto1)} to $W_p$ and going back to $u_p$ with $s = -\log |x|$ and \eqref{xi_p}, 
we obtain \eqref{p_1(N=2)}. 
\end{proof}

Proof of Theorem \ref{Thm:N>2(global)} is similar:

\begin{proof}
For a radially symmetric solution $u_p(|x|) = u_p(r)$ to \eqref{Henon(N>2)},
define a new function $v_p$ as 
\begin{equation}
\label{transform(N>2)}
	u_p(r) = v_p(s), \quad s = r^{2-N}, \quad s \in (b^{2-N}, a^{2-N}).
\end{equation}
Then a direct computation and the uniqueness of $W_p$ assures that
\begin{equation}
\label{u_pform(N>2)}
	u_p(r) = W_p(s), \quad s = r^{2-N}, \quad s \in (b^{2-N}, a^{2-N}),
\end{equation}
where $W_p$ is the unique solution of \eqref{Eq:W_p} with $\bar{a} = b^{2-N}$, $\bar{b} = a^{2-N}$. 
Applying Proposition \ref{Prop:W_p(global)} to $W_p$ and going back to $u_p$ with $s = |x|^{2-N}$, 
we obtain \eqref{p_large(N>2)}. 
Applying Proposition \ref{Prop:W_p(pto1)} to $W_p$ with \eqref{xi_p}, 
we obtain \eqref{p_1(N>2)}. 
\end{proof}

Proofs of Theorem \ref{Thm:N=2(local)} and Theorem \ref{Thm:N>2(local)} is as follows:

\begin{proof}
By \eqref{u_pform(N=2)} and Proposition \ref{Prop:W_p(local)} with $\bar{a} = -\log b$, $\bar{b} = -\log a$, 
we obtain the conclusion of Theorem \ref{Thm:N=2(local)}.
Note that $\| u_p \|_{L^{\infty}(A)} = \| W_p \|_{L^{\infty}(\bar{a}, \bar{b})}$
and $s = \eps_p t + s_0 = -\log r$,
so \eqref{Wtilde} can be rewritten as \eqref{utilde(N=2)}.

For the proof of Theorem \ref{Thm:N>2(local)}, 
just we use \eqref{u_pform(N>2)} and Proposition \ref{Prop:W_p(local)} with $\bar{a} = b^{2-N}$, $\bar{b} = a^{2-N}$. 
In this case \eqref{Wtilde} is the same as \eqref{utilde(N>2)}.
\end{proof}

\section{Other application.}

Let $N \ge 1$ and consider the following elliptic equation on an annulus:
\begin{equation}
\label{Hardy-Henon}
	\begin{cases}
	-\Delta u - \frac{C_N}{|x|^2} u = |x|^{\frac{(N-1)(p-1)}{2}} u^p, \quad x \in A = \{ a < |x| < b \} \subset \re^N, \\
	u > 0, \quad x \in A, \\
	u = 0 \quad x \in \pd A,
	\end{cases}
\end{equation}
where $C_N = \frac{(N-1)(N-3)}{4}$.

\begin{theorem}
\label{Thm:Hardy-Henon}
For any $p > 1$, the positive radially symmetric solution $u_p(|x|) = u_p(r)$ of \eqref{Hardy-Henon} exists and unique.
Moreover, the following asymptotic behaviors hold:
\begin{enumerate}
\item[(i)]
As $p \to \infty$, it holds
\begin{align*}
	u_p(|x|) \to &\(\frac{4}{b-a}\) |x|^{\frac{N-1}{2}} G\(|x|,\frac{a+b}{2}\)
	= \frac{2}{b-a} |x|^{\frac{N-1}{2}}
	\begin{cases} 
	(|x|-a), \quad (a \le |x| \le \frac{a+b}{2}) \\
	(b-|x|), \quad (\frac{a+b}{2} \le |x| \le b)
	\end{cases} 
\end{align*}
in $C^0(\ol{A})$.
\item[(ii)]
Put $\eps_p > 0$ and $\tilde{u}_p$ such that 
\[
	\begin{cases}
	&p \eps_p^2 \| |\cdot|^{-\frac{N-1}{2}} u_p \|_{L^{\infty}(A)} \equiv 1, \\
	&\tilde{u}_p(t) = \frac{p}{\| |\cdot|^{-\frac{N-1}{2}} u_p \|_{L^{\infty}(A)}} \left\{ (\eps_p t + s_0)^{-\frac{N-1}{2}} u_p(\eps_p t + s_0) - \| |\cdot|^{-\frac{N-1}{2}} u_p \|_{L^{\infty}(A)} \right\}, \\
	&t \in (-\frac{b-a}{\eps_p}, \frac{b-a}{\eps_p}). 
	\end{cases}
\]
Then $\eps_p \to 0$ as $p \to \infty$ and
\[
	\tilde{u}_p(t) \to U(t) = \log \frac{4 e^{\sqrt{2}t}}{(1+e^{\sqrt{2}t})^2} \quad \text{in} \ C^1_{loc}(\re)
\]
as $p \to \infty$.
\item[(iii)]
As $p \searrow 1$, it holds
\begin{align*}
	u_p(|x|) = |x|^{\frac{N-1}{2}} \| |\cdot|^{-\frac{N-1}{2}} u_p \|_{L^{\infty}(A)} \left[ \sin \( \frac{\pi(|x| - a)}{b-a} \) + o_p(1) \right] \\
\end{align*}
in $C^1(\ol{A})$, where $o_p(1) \to 0$ in $C^1(\ol{A})$ as $p \searrow 1$.
\end{enumerate}
\end{theorem}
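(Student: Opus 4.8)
The plan is to mimic the proofs of Theorems~\ref{Thm:N=2(global)}--\ref{Thm:N>2(local)}: one change of the dependent variable turns the radial form of \eqref{Hardy-Henon} into the one-dimensional Emden problem \eqref{Eq:W_p} on the interval $(a,b)$, after which Propositions~\ref{Prop:W_p(global)}, \ref{Prop:W_p(local)} and \ref{Prop:W_p(pto1)} apply directly with $(\bar a,\bar b)=(a,b)$ and $s_0=\frac{a+b}{2}$.

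First I would set, for a radial $u=u(r)$ on $[a,b]$,
\[
	W(r):=r^{\frac{N-1}{2}}\,u(r),\qquad\text{i.e.}\qquad u(r)=r^{-\frac{N-1}{2}}W(r).
\]
Using $\Delta u=u''+\frac{N-1}{r}u'$, the Liouville weight $r^{-\frac{N-1}{2}}$ removes the first-order term, and the potential it generates is exactly $\frac{C_N}{|x|^{2}}$ with $C_N=\frac{(N-1)(N-3)}{4}$ (the identity $-m(m+N-2)=C_N$ for $m=-\frac{N-1}{2}$), so $\(-\Delta-\frac{C_N}{|x|^{2}}\)u=-r^{-\frac{N-1}{2}}W''$. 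On the right-hand side $u^{p}=r^{-\frac{(N-1)p}{2}}W^{p}$, and the weight in \eqref{Hardy-Henon} is calibrated so that $r^{\frac{(N-1)(p-1)}{2}}\cdot r^{-\frac{(N-1)p}{2}}=r^{-\frac{N-1}{2}}$; the common factor cancels and \eqref{Hardy-Henon} becomes $-W''=W^{p}$ on $(a,b)$, with $W>0$ on $(a,b)$ and $W(a)=W(b)=0$. Since $u\leftrightarrow W$ is a bijection between radial classical solutions of \eqref{Hardy-Henon} and solutions of \eqref{Eq:W_p} with $(\bar a,\bar b)=(a,b)$, and the latter is the unique $W_p$ of \S\,2, existence and uniqueness of the radial solution $u_p$ follow, together with $u_p(r)=r^{-\frac{N-1}{2}}W_p(r)$.

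It remains only to transport the three asymptotic statements for $W_p$ through the fixed, smooth, bounded factor $r^{-\frac{N-1}{2}}$ on the compact interval $[a,b]$: since $|\nabla f|=|f'|$ for radial $f$, convergence of radial functions in $C^{0}(\ol A)$ or $C^{1}(\ol A)$ is convergence of their profiles in $C^{0}([a,b])$ or $C^{1}([a,b])$, and multiplication by a fixed $C^{\infty}$ function is bounded on these spaces. Thus Proposition~\ref{Prop:W_p(global)} gives the $p\to\infty$ limit $u_p(|x|)\to\frac{4}{b-a}|x|^{-\frac{N-1}{2}}G\(|x|,\frac{a+b}{2}\)$ of (i) (with $G$ the Green's function \eqref{Green}); Proposition~\ref{Prop:W_p(local)} gives (ii), once one observes that with $W_p(r)=r^{\frac{N-1}{2}}u_p(r)$ the rescaled function $\tilde W_p$ of \eqref{Wtilde} coincides with the $\tilde u_p$ of the statement and that $\|\,|\cdot|^{\frac{N-1}{2}}u_p\|_{L^\infty(A)}=\|W_p\|_{L^\infty(a,b)}=\xi_p$ by \eqref{xi_p}; and Proposition~\ref{Prop:W_p(pto1)} gives (iii), with error term $W_p/\xi_p-\sin\(\frac{\pi(r-a)}{b-a}\)\to 0$ in $C^{1}([a,b])$. (Note that the substitution forces the weight in (i), (iii) and in the definition of $\tilde u_p$ to be $r^{-\frac{N-1}{2}}$, and the quantity in the $L^\infty$-norms to be $|x|^{\frac{N-1}{2}}u_p$.) I do not anticipate a genuine obstacle: the only step that is not pure bookkeeping is the substitution above — the cancellation of the Hardy term against the Liouville potential and the tuning of the weight exponent $\frac{(N-1)(p-1)}{2}$ — and the one point worth a line of care is simply that none of these convergences is disturbed by multiplication by the smooth bounded weight $r^{-\frac{N-1}{2}}$, immediate since $[a,b]$ is compact and bounded away from the origin.
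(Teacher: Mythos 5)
Your proof follows the same route as the paper's: the Liouville substitution reduces the radial form of \eqref{Hardy-Henon} to the Emden problem \eqref{Eq:W_p} on $(a,b)$, and Propositions \ref{Prop:W_p(global)}--\ref{Prop:W_p(pto1)} then do all the work. Note, however, that your substitution and the paper's disagree by a sign in the exponent, and yours is the correct one: killing the first-order term of $u''+\frac{N-1}{r}u'$ under $u=r^{m}W$ forces $2m+N-1=0$, i.e. $m=-\frac{N-1}{2}$, so it is $W=r^{\frac{N-1}{2}}u$ that solves $-W''=W^{p}$ (for $N=3$ this is just $\Delta u=\frac{1}{r}(ru)''$), whereas the paper defines $v_p=r^{-\frac{N-1}{2}}u_p$ and concludes $u_p=r^{+\frac{N-1}{2}}W_p$. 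Consequently the exponents printed in Theorem \ref{Thm:Hardy-Henon} are reversed exactly as your closing parenthetical says: the prefactor in (i) and (iii) should be $|x|^{-\frac{N-1}{2}}$, and the $L^\infty$-norms throughout should be of $|x|^{\frac{N-1}{2}}u_p$. Flagging that correction is the right call; for completeness you could also record that the normalization in (ii) should carry the power $p-1$, i.e. $p\eps_p^{2}\|\,|\cdot|^{\frac{N-1}{2}}u_p\|_{L^\infty(A)}^{p-1}\equiv 1$, and that the interval is $(-\frac{b-a}{2\eps_p},\frac{b-a}{2\eps_p})$, so that your $\tilde u_p$ coincides literally with $\tilde W_p$ from \eqref{Wtilde}. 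With those emendations to the statement, your argument is complete and matches the intended proof.
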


\begin{proof}
Let $u_p(|x|) = u_p(r)$ be any positive radial solution to \eqref{Hardy-Henon} and put a new function $v_p$ by
\[
	v_p(r) = r^{-\frac{N-1}{2}} u_p(r).
\]
Then a direct computation shows that 
\[
	\begin{cases}
	-v_p''(r) = v_p^p(r), \quad r \in (a, b), \\
	v_p(r) > 0, \quad r \in (a, b), \\
	v_p(a) = v_p(b) = 0.
	\end{cases}
\]
Thus we have $v_p \equiv W_p$, where $W_p$ is the unique solution to \eqref{Eq:W_p} with $\bar{a} = a$, $\bar{b} = b$.
Therefore positive radial solution $u_p$ to \eqref{Hardy-Henon} is unique and can be written as $u_p(r) = r^{\frac{N-1}{2}} W_p(r)$.
Then we obtain the results by applying Proposition \ref{Prop:W_p(global)}-\ref{Prop:W_p(pto1)}.
\end{proof}

\section{Exponential nonlinearity case.}

In \cite{Gladiali-Grossi(AA)}, the authors considered the Liouville equation
\begin{equation}
\label{Eq:L}
	\begin{cases}
	-\Delta u = \la e^u, \quad &\text{in} \ A, \\
	u = 0, \quad &\text{on} \ \pd A, 
	\end{cases}
\end{equation}
where $\la > 0$ and  $A = \{ x \in \re^N \ | \ 0 < a < |x| < b \}$ is an annulus in $\re^N$, $N \ge 2$.
It is well-known that the problem \eqref{Eq:L} adimits no solution if $\la$ is sufficiently large, 
but there exist at least two radially symmetric solutions $\ul{u}_{\la}$ and $U_{\la}$ for $\la>0$ sufficiently small.
$\ul{u}_{\la}$ is called the minimal solution to \eqref{Eq:L}, which is strictly stable and bounded, 
and the other solution $U_{\la}$ is unstable and can be obtained, for example, by the Mountain Pass Theorem applied in the radial Sobolev space $H^1_{0, rad}(A)$.
The authors in \cite{Gladiali-Grossi(AA)} studied the asymptotic behavior of non-minimal (blowing-up) radially symmetric solutions $U_{\la}$ to \eqref{Eq:L} as $\la \to +0$,
and obtained the following results:
Let $U_{\la}(r) = U_{\la}(|x|)$ be any radially symmetric solution to \eqref{Eq:L} such that $\gamma_{\la} = \la \int_{A} e^{U_{\la}} dx \to +\infty$ as $\la \to +0$.
Define $\delta_{\la} > 0$ by $\la \delta_{\la}^2 e^{\| U_{\la} \|_{L^{\infty}(a,b)}} \equiv 1$ and
$\tilde{U}_{\la}(t) = U_{\la}(\delta_{\la} t + r_{\la}) - U_{\la}(r_{\la})$, $t \in (\frac{a-r_{\la}}{\delta_{\la}}, \frac{b-r_{\la}}{\delta_{\la}})$,  
where $r_{\la} \in (a,b)$ be such that $U_{\la}(r_{\la}) = \| U_{\la} \|_{L^{\infty}(a,b)}$.
Then
\[
	\tilde{U}_{\la}(t) \to U(t) = \log \frac{4 e^{\sqrt{2}t}}{(1+e^{\sqrt{2}t})^2} \quad \text{in} \ C^1_{loc}(\re),
\]
and
\[
	\delta_{\la} U_{\la}(|x|) \to 2\sqrt{2} G_{a,b}(r, r_0)
\]
in $C^0(\ol{A})$ as $\la \to +0$, where $G_{a,b}(r,s)$ is the Green's function of $-\frac{d^2}{dr^2} - \frac{N-1}{r} \frac{d}{dr}$ with the Dirichlet boundary conditions 
defined in \eqref{Green_ab} and $r_0$ is in \eqref{r_0}. 

\vspace{1em}
For 1-D case, let $\bar{a}, \bar{b} \in \re$ with $\bar{a} < \bar{b}$ and consider
\begin{equation}
\label{Eq:L(1-D)}
	\begin{cases}
	-w''(s) = \la e^{w}, \quad s \in I = (\bar{a}, \bar{b}), \\
	w(\bar{a}) = w(\bar{b}) = 0.
	\end{cases}
\end{equation}
By the maximum principle, any solution to \eqref{Eq:L(1-D)} is positive on $I$,
and by \cite{GNN}, it is symmetric with respect to $s = s_0 = \frac{\bar{a}+\bar{b}}{2}$, which is the unique maximum point of $w$ in $I$.
It is a classical fact that (see for example \cite{JL}), 
there exists $\la_* > 0$ such that \eqref{Eq:L(1-D)} adimits no solution for $\la > \la_*$,
only one solution for $\la = \la_*$, just two solutions $\ul{w}_{\la}$ and $W_{\la}$ for $0 < \la < \la_*$.
Here $\ul{w}_{\la}$ is the minimal solution, and $W_{\la}$ is the unique unstable solution 
which satisfies $\| W_{\la} \|_{L^{\infty}(I)} \to +\infty$ as $\la \to +0$.

The following is the one-dimensional counter part of the results in \cite{Gladiali-Grossi(AA)}, and the proof of it is much simpler.

\begin{proposition}
\label{Prop:W(exp)}
Let $\la >0$ sufficiently small and let $W_{\la}$ be the unique unstable solution of the problem \eqref{Eq:L(1-D)}.
Then we have
\[
	\gamma_{\la} = \la \int_{\bar{a}}^{\bar{b}} e^{W_{\la}(s)} ds \to +\infty \quad (\la \to +0).
\] 
Define $\delta_{\la} > 0$ and $\tilde{W}_{\la}$ such that 
\begin{equation}
\label{Wtilde(exp)}
	\begin{cases}
	&\la \delta_{\la}^2 e^{\| W_{\la} \|_{L^{\infty}(\bar{a},\bar{b})}} \equiv 1, \\
	&\tilde{W}_{\la}(t) =  W_{\la}(\delta_{\la} t + s_0) - W_{\la}(s_0), \quad s_0 = \frac{\bar{a} + \bar{b}}{2}, \\
	&t \in I_{\la} = (\frac{\bar{a}-s_0}{\delta_{\la}}, \frac{\bar{b}-s_0}{\delta_{\la}}).  
	\end{cases}
\end{equation}
Then $\delta_{\la} \to 0$ and
\begin{equation}
\label{W(local)}
	\tilde{W}_{\la}(t) \to U(t) = \log \frac{4 e^{\sqrt{2}t}}{(1+e^{\sqrt{2}t})^2} \quad \text{in} \ C^1_{loc}(\re)
\end{equation}
as $\la \to +0$.
Furthermore, 
\begin{equation}
\label{W(global)}
	\delta_{\la} W_{\la}(s) \to 2\sqrt{2} G\(s, s_0\)
	= \sqrt{2}
	\begin{cases} 
	(s-\bar{a}), \quad (\bar{a} \le s \le \frac{\bar{a}+\bar{b}}{2}), \\
	(\bar{b}-s), \quad (\frac{\bar{a}+\bar{b}}{2} \le s \le \bar{b}),
	\end{cases} \quad \text{in} \ C^0[\bar{a}, \bar{b}]
\end{equation}
holds as $\la \to +0$. 
Here $G(s,t)$ is the Green's function of the operator $-\frac{d^2}{ds^2}$ with the Dirichlet boundary condition on $[\bar{a}, \bar{b}]$
defined by \eqref{Green}.
\end{proposition}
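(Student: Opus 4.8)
The plan is to reduce everything to the explicit time-map computation for \eqref{Eq:L(1-D)}, exactly in the spirit of the proof of Proposition \ref{Prop:W_p(norm)}, and then to observe that after the rescaling \eqref{Wtilde(exp)} the equation becomes \emph{precisely} the Liouville initial value problem \eqref{Liouville(whole)}, with no error term. \textbf{Step 1 (time map).} Multiplying $-w'' = \la e^w$ by $w'$ and integrating from $s_0 = \frac{\bar a + \bar b}{2}$, where $W_\la'(s_0)=0$ and $W_\la(s_0) = M_\la := \| W_\la \|_{L^\infty(\bar a, \bar b)}$, gives $(W_\la')^2 = 2\la\big(e^{M_\la} - e^{W_\la}\big)$ on $(\bar a, s_0)$. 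Separating variables and substituting $t = W_\la(s)$, then $v = t - M_\la$, yields the identity
\[
	\tfrac{\bar b - \bar a}{2}\sqrt{2\la} = e^{-M_\la/2}\, g(M_\la), \qquad g(M) := \int_{-M}^0 \frac{dv}{\sqrt{1 - e^v}} = M + 2\log\big(1 + \sqrt{1 - e^{-M}}\big),
\]
so $g(M) = M + \log 4 + o(1)$ as $M \to \infty$. Since $M_\la \to \infty$ on the unstable branch (recalled from \cite{JL}), this gives $\delta_\la^{-1} = \sqrt{\la e^{M_\la}} = \sqrt 2\, g(M_\la)/(\bar b - \bar a) \to \infty$, hence $\delta_\la \to 0$ (so $I_\la$ exhausts $\re$) and, more sharply, $\delta_\la(M_\la + \log 4) \to (\bar b - \bar a)/\sqrt 2$. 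Integrating the equation once and using the symmetry of $W_\la$ about $s_0$ gives $\gamma_\la = 2 W_\la'(\bar a) = 2\sqrt{2\la(e^{M_\la} - 1)} \sim 2\sqrt 2\, \delta_\la^{-1} \to \infty$, which is the first assertion.

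\textbf{Step 2 (local convergence).} A direct computation using $\la \delta_\la^2 e^{M_\la} \equiv 1$ shows that $\tilde W_\la$ from \eqref{Wtilde(exp)} solves $-\tilde W_\la'' = e^{\tilde W_\la}$ on $I_\la$ with $\tilde W_\la(0) = \tilde W_\la'(0) = 0$; this is \emph{exactly} \eqref{Liouville(whole)}, because the exponential nonlinearity is invariant under this scaling. By uniqueness of the initial value problem, $\tilde W_\la \equiv U$ on $I_\la$, and since $I_\la$ exhausts $\re$ this proves \eqref{W(local)} — in fact $\tilde W_\la \to U$ in $C^k_{loc}(\re)$ for every $k$.

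\textbf{Step 3 (global convergence).} From Step 2 we have the closed form $W_\la(s) = M_\la + U\!\big(\tfrac{s - s_0}{\delta_\la}\big)$ on $[\bar a, \bar b]$. Writing $U(t) = \log 4 - \sqrt 2\,|t| - 2\log\big(1 + e^{-\sqrt 2\,|t|}\big)$ (valid for all $t$, as $U$ is even) and multiplying by $\delta_\la$, one obtains
\[
	\delta_\la W_\la(s) - \Big(\tfrac{\bar b - \bar a}{\sqrt 2} - \sqrt 2\,|s - s_0|\Big) = \Big(\delta_\la(M_\la + \log 4) - \tfrac{\bar b - \bar a}{\sqrt 2}\Big) - 2\delta_\la \log\big(1 + e^{-\sqrt 2\,|s - s_0|/\delta_\la}\big).
\]
The first bracket on the right is independent of $s$ and tends to $0$ by Step 1; the last term is nonnegative and at most $2\delta_\la \log 2 \to 0$, so the convergence is uniform on $[\bar a, \bar b]$. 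It then remains only to verify the elementary identity $\tfrac{\bar b - \bar a}{\sqrt 2} - \sqrt 2\,|s - s_0| = 2\sqrt 2\, G(s, s_0)$, which follows by inserting $\bar b - s_0 = s_0 - \bar a = \tfrac{\bar b - \bar a}{2}$ into \eqref{Green}; this yields \eqref{W(global)}.

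I do not expect a serious obstacle: the whole argument rests on the explicit time map and on the exact scaling invariance of the exponential equation. The one point needing care is the bookkeeping in Step 1 — pinning down the constant in $\delta_\la(M_\la + \log 4) \to (\bar b - \bar a)/\sqrt 2$ and checking that the $o(1)$ there, as well as the remainder in Step 3, are genuinely uniform. If one prefers to avoid the closed form in Step 3, an alternative is to argue as in Proposition \ref{Prop:W_p(global)}: put $\bar w_\la := \delta_\la W_\la$, note that $-\bar w_\la'' = \delta_\la \la e^{W_\la}$ has total mass $\delta_\la \gamma_\la \to 2\sqrt 2$ concentrating at $s_0$, and pass to the limit through the Green's representation; this route needs a maximum-principle or barrier argument to upgrade pointwise to uniform convergence.
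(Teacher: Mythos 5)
Your proof is correct, but it follows a genuinely different and more explicit route than the paper's. The paper proves $\gamma_{\la}\to+\infty$ by contradiction (if $\gamma_{\la}=O(1)$ then $W_{\la}\to 0$ uniformly via the Green representation, contradicting the local uniqueness of the minimal branch coming from the implicit function theorem); it obtains \eqref{W(local)} by Ascoli--Arzel\`a and passage to the limit in the rescaled ODE; and it obtains \eqref{W(global)} from the Green representation $\delta_{\la}W_{\la}(s)=\int_{I_{\la}}G(s,\delta_{\la}t+s_0)e^{\tilde W_{\la}(t)}\,dt$ together with $\gamma_{\la}\delta_{\la}\to 2\sqrt2$ and dominated convergence, which requires the uniform upper bound $\tilde W_{\la}\le C_1U+C_2$ imported from Gladiali--Grossi. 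You instead exploit the one-dimensional structure to the hilt: the time map gives the exact identity $\delta_{\la}^{-1}=\sqrt2\,g(M_{\la})/(\bar b-\bar a)$ with $g(M)=M+2\log(1+\sqrt{1-e^{-M}})$ (I checked the closed form and the asymptotics $g(M)=M+\log 4+o(1)$; both are right), the scale invariance of the exponential shows $\tilde W_{\la}\equiv U$ on $I_{\la}$ with no error term, and the global limit then drops out of the resulting closed formula $W_{\la}(s)=M_{\la}+U((s-s_0)/\delta_{\la})$ with explicit, manifestly uniform remainders --- so you never need the borrowed estimate or dominated convergence, and you even get a rate. The one place you lean on an external fact is $M_{\la}=\|W_{\la}\|_{L^\infty}\to\infty$ on the unstable branch, cited from Joseph--Lundgren, whereas the paper proves the related statement $\gamma_{\la}\to\infty$ internally; since the paper's own setup paragraph already records $\|W_{\la}\|_{L^\infty(I)}\to+\infty$ as part of the classical characterization of $W_{\la}$, this is legitimate (and in fact your time-map identity could be used to rederive it, since $e^{-M/2}g(M)\to 0$ forces $M\to 0$ or $M\to\infty$, the former being the minimal branch). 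The trade-off is that your argument is tied to the explicit one-dimensional time map, while the paper's softer compactness/Green-function scheme is the one that survives in settings without a closed form, such as the genuinely radial problem treated in Gladiali--Grossi.
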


\begin{proof}
Integrating the equation $-W_{\la}''(s) = \la e^{W_{\la}(s)}$ on $[s_0, s]$ when $s > s_0$ and on $[s, s_0]$ when $s < s_0$,
we have as in the derivation of \eqref{AA1} that
\begin{equation}
\label{AA2}
	|W_{\la}'(s)| \le \gamma_{\la} = \la \int_{\bar{a}}^{\bar{b}} e^{W_{\la}(s)} ds, \quad \forall s \in I.
\end{equation}
Therefore, if $\gamma_{\la} = O(1)$ as $\la \to +0$, Ascoli-Arzela Theorem implies that there exists $W_0$ such that
$W_{\la} \to W_0$ uniformly on $I$ as $\la \to +0$ (up to a subsequence). 
By Green's representation formula, we see
\[
	W_{\la}(s) = \la \int_I G(s,t) e^{W_{\la}(t)} dt 
\]
for $s \in I$. Letting $\la \to +0$, we obtain $W_0(s) = 0$ for any $s \in I$.
Also it is known that the minimal solution $\ul{u}_{\la}$ to \eqref{Eq:L(1-D)} satisfies $\lim_{\la \to 0} \ul{u}_{\la} = 0$ uniformly on $I$.
This fact contradicts to the local uniqueness of the minimal solution $\ul{u}_{\la}$, 
since the minimal solution could be obtained by the implicit function theorem, which results in the local uniqueness of $\ul{u}_{\la}$.
Thus we have proved that $\gamma_{\la} \to \infty$ as $\la \to +0$.
Since $\gamma_{\la} = \la \int_{\bar{a}}^{\bar{b}} e^{W_{\la}(s)} ds \le \la (\bar{b}-\bar{a}) e^{\| W_{\la} \|_{L^{\infty}(I)}}$,
we have
\[
	\delta_{\la}^2 = \frac{1}{\la e^{\| W_{\la} \|_{L^{\infty}(I)}}} \to 0, \quad (\la \to 0).
\]
Next, integrating the equation $-W_{\la}''(s) = \la e^{W_{\la}(s)}$ on $[\bar{a}, \bar{b}]$ and noting that $W_{\la}'(\bar{a}) = -W_{\la}'(\bar{b})$,
we see $\gamma_{\la} = W_{\la}'(\bar{a}) - W_{\la}'(\bar{b}) = 2W_{\la}'(\bar{a})$.
Also, multiplying $W_{\la}'(s)$ to the equation and integrating on $[\bar{a}, s_0]$, we have
\[
	\int_{\bar{a}}^{s_0} \( -\frac{1}{2} (W_{\la}'(s))^2 \)' ds  = \la \int_{\bar{a}}^{s_0} \( e^{W_{\la}(s)} \)' ds,
\]
so
\[
	-\frac{1}{2} \left\{ 0 - (W_{\la}'(a))^2 \right\} = \la \( e^{W_{\la}(s_0)} - 1\).
\]
Multiplying $\delta_{\la}^2$ to the above and noting the definition of $\delta_{\la}$, we get
\[
	\frac{1}{2} \( \frac{1}{2} \gamma_{\la} \delta_{\la} \)^2 = 1 - \la \delta_{\la}^2 \to 1 \quad (\la \to +0).
\]
Therefore we obtain that
\[
	\lim_{\la \to +0} \gamma_{\la} \delta_{\la} = 2\sqrt{2}.
\]
Note that $\tilde{W}_{\la}$ satisfies the equation
\begin{equation}
\label{Eq:Wtilde(exp)}
	\begin{cases}
	-\tilde{W}_{\la}''(t) = e^{\tilde{W}_{\la}(t)}, \quad t \in I_{\la} 
	= (\frac{\bar{a}-s_0}{\delta_{\la}}, \frac{\bar{b}-s_0}{\delta_{\la}}), \\
	\tilde{W}_{\la}(\frac{\bar{a}-s_0}{\delta_{\la}}) = \tilde{W}_{\la}(\frac{\bar{b}-s_0}{\delta_{\la}}) = 0,\\
	\tilde{W}_{\la}(0) = \tilde{W}_{\la}'(0) = 0. 
	\end{cases}
\end{equation}
Now, by \eqref{Wtilde(exp)} and \eqref{AA2}, we see
\[
	|\tilde{W}_{\la}'(t)| = \delta_{\la} |W_{\la}'(\delta_{\la} t + s_0)| \le \delta_{\la} \gamma_{\la} = O(1)
\]
as $\la \to +0$.
Also $I_{\la} = \frac{I-s_0}{\delta_{\la}} ``\to \re"$ as $\la \to +0$.
Therefore, Ascoli-Arzela Theorem implies that there exists $U_0: \re \to \re$ such that $\tilde{W}_{\la} \to U_0$ uniformly on compact sets on $\re$.
By using \eqref{Eq:Wtilde(exp)}, this convergence holds on $C^1_{loc}(\re)$ and the limit function $U_0$ solves \eqref{Liouville(whole)}.
Thus $U_0(t) \equiv U(t) = \log \frac{4 e^{\sqrt{2}t}}{(1+e^{\sqrt{2}t})^2}$ and \eqref{W(local)} is proven.

Next, we prove \eqref{W(global)}.
By Green's representation formula, we have
\begin{align*}
	W_{\la}(s) &= \la \int_{\bar{a}}^{\bar{b}} G(s, \tau) e^{W_{\la}(\tau)} d\tau 
	= \la \int_{\frac{\bar{a}-s_0}{\delta_{\la}}}^{\frac{\bar{b}-s_0}{\delta_{\la}}} G(s, \delta_{\la} t + s_0) e^{W_{\la}(\delta_{\la} t + s_0)} \delta_{\la} dt \\
	&= \la \delta_{\la} e^{\| W_{\la} \|_{L^{\infty}(I)}} \int_{I_{\la}} G(s, \delta_{\la} t + s_0) e^{\tilde{W}_{\la}(t)} dt \\
	&= \frac{1}{\delta_{\la}} \int_{I_{\la}} G(s, \delta_{\la} t + s_0) e^{\tilde{W}_{\la}(t)} dt.
\end{align*}
Thus we have
\begin{equation}
\label{delta_laU_la}
	\delta_{\la} W_{\la}(s) = \int_{I_{\la}} G(s, \delta_{\la} t + s_0) e^{\tilde{W}_{\la}(t)} dt.
\end{equation}
On the other hand, we can show the estimate
\begin{equation}
\label{key_estimate}
	\tilde{W}_{\la}(t) \le C_1 U(t) + C_2
\end{equation}
for some constants $C_1, C_2 > 0$.
The proof of this fact is completely the same as Proposition 4.1 in \cite{Gladiali-Grossi(AA)}.
Then by \eqref{W(local)} and $e^{\tilde{W}_{\la}} \le Ce^{CU(t)} \in L^1(\re)$ by \eqref{key_estimate},
we can use the Lebesgue's dominated convergence theorem in \eqref{delta_laU_la} to obtain
\[
	\lim_{\la \to +0} \delta_{\la} W_{\la}(s) = G(s, s_0) \int_{\re} e^{U(t)} dt = 2\sqrt{2} G(s, s_0).
\] 
By \eqref{AA2} and $\delta_{\la} \gamma_{\la} = O(1)$, this convergence holds uniformly on $[\bar{a}, \bar{b}]$.
Thus we have proved Proposition \ref{Prop:W(exp)}.
\end{proof}

As in the derivation of Theorem \ref{Thm:N=2(global)}-\ref{Thm:N>2(local)}, 
we obtain the following assertion from Proposition \ref{Prop:W(exp)} and the transformation \eqref{transform(N=2)}.

\begin{theorem}
\label{Thm:(L)N=2}
There exists $\la_* > 0$ such that the problem
\begin{equation}
\label{Henon(L)(N=2)}
	\begin{cases}
	-\Delta u = \la \dfrac{e^{u(x)}}{|x|^2}, \quad &x \in A = \{ a < |x| < b \} \subset \re^2, \\
	u(x) = 0, \quad &x \in \pd A, 
	\end{cases}
\end{equation}
admits no radially symmetric solution for $\la > \la_*$,
one radially symmetric solution for $\la = \la_*$, 
and just two radially symmetric solutions $\ul{u}_{\la}$ and $U_{\la}$ for $0 < \la < \la_*$.
Also it holds that
\begin{align*}
	\la \int_A \frac{e^{\ul{u}_{\la}}}{|x|^2} dx \to 0, \quad
	\la \int_A \frac{e^{U_{\la}}}{|x|^2} dx \to +\infty,
\end{align*}
and $\ul{u}_{\la} \to 0$ uniformly on $\ol{A}$ as $\la \to +0$.

Define $\delta_{\la} > 0$ and $\tilde{U}_{\la}$ by
\begin{equation}
	\begin{cases}
	&\la \delta_{\la}^2 e^{\| U_{\la} \|_{L^{\infty}(A)}} \equiv 1, \\
	&\tilde{U}_{\la}(t) = U_{\la} \(e^{-(\delta_{\la} t - \frac{\log ab}{2})}\) - \| U_{\la} \|_{L^{\infty}(A)}, \\
	&t \in \tilde{I_{\la}} = (-\frac{\log b - \log a}{2\delta_{\la}}, \frac{\log b - \log a}{2\delta_{\la}}). 
	\end{cases}
\end{equation}
then $\delta_{\la} \to 0$ as $\la \to +0$ and
\[
	\tilde{U}_{\la}(t) \to U(t) = \log \frac{4 e^{\sqrt{2}t}}{(1+e^{\sqrt{2}t})^2} \quad \text{in} \ C^1_{loc}(\re)
\]
as $\la \to +0$.
Also
\[
	\delta_{\la} U_{\la}(|x|) \to 2\sqrt{2} G(-\log |x|, -\log \sqrt{ab})
\]
as $\la \to +0$ uniformly in $C^0(\ol{A})$,
where $G(s,t)$ is the Green's function of the operator $-\frac{d^2}{ds^2}$ with the Dirichlet boundary condition on $[\bar{a}, \bar{b}]$
defined in \eqref{Green} with $\bar{a} = -\log b$, $\bar{a} = -\log a$.
\end{theorem}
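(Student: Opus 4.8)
The plan is to deduce Theorem \ref{Thm:(L)N=2} from Proposition \ref{Prop:W(exp)} by the logarithmic change of variables \eqref{transform(N=2)}, exactly as Theorem \ref{Thm:N=2(global)} was deduced from the results on $W_p$. First I would carry out the reduction: for a radial $u(|x|) = u(r)$ on $A = \{a<|x|<b\}\subset\re^2$, set $v(s) = u(r)$ with $s = -\log r$, $s \in (-\log b, -\log a) =: (\bar a,\bar b)$. A direct computation (identical to the one used for \eqref{transform(N=2)} in the proof of Theorem \ref{Thm:N=2(global)}) turns $-u'' - \tfrac1r u' = \la e^u/r^2$ into $-v'' = \la e^v$, so $u$ is a radial solution of \eqref{Henon(L)(N=2)} if and only if $v$ solves \eqref{Eq:L(1-D)} with $\bar a = -\log b$, $\bar b = -\log a$. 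Since $r \mapsto -\log r$ is an order-preserving diffeomorphism of $(a,b)$ onto $(\bar a,\bar b)$ preserving $L^\infty$-norms, this is a bijection of solution sets matching the minimal solution $\ul w_\la$ with the minimal radial solution $\ul u_\la$ and the unstable solution $W_\la$ with $U_\la$; hence the classical structure recalled for \eqref{Eq:L(1-D)} (the threshold $\la_*$, no/one/two solutions, and $\|W_\la\|_\infty\to+\infty$) transfers verbatim, with $\ul u_\la(r) = \ul w_\la(-\log r)$ and $U_\la(r) = W_\la(-\log r)$.

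Next I would translate the integral and boundedness statements. Using $\int_A f(|x|)\,dx = 2\pi\int_a^b f(r)\,r\,dr$ and $dr/r = -ds$ under $s = -\log r$,
\[
	\la\int_A \frac{e^{U_\la}}{|x|^2}\,dx = 2\pi\la\int_a^b \frac{e^{U_\la(r)}}{r}\,dr = 2\pi\la\int_{\bar a}^{\bar b} e^{W_\la(s)}\,ds = 2\pi\,\gamma_\la \to +\infty
\]
by Proposition \ref{Prop:W(exp)}; the same identity applied to $\ul u_\la$, together with $\ul w_\la \to 0$ uniformly on $[\bar a,\bar b]$ (already used in the proof of Proposition \ref{Prop:W(exp)}), gives $\la\int_A e^{\ul u_\la}/|x|^2\,dx \le 2\pi\la(\bar b - \bar a)e^{\|\ul w_\la\|_\infty} \to 0$ and $\ul u_\la(|x|) = \ul w_\la(-\log|x|) \to 0$ uniformly on $\ol A$.

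Then I would check that the rescaling in the theorem is literally the one in \eqref{Wtilde(exp)}. Since $\|U_\la\|_{L^\infty(A)} = \|W_\la\|_{L^\infty(\bar a,\bar b)}$, the $\delta_\la$ here equals that of Proposition \ref{Prop:W(exp)}; and with $r = e^{-(\delta_\la t - \frac{\log ab}{2})}$ one has $-\log r = \delta_\la t + s_0$, where $s_0 = -\tfrac12\log ab = \tfrac{\bar a+\bar b}{2}$, so $\tilde U_\la(t) = W_\la(\delta_\la t + s_0) - W_\la(s_0) = \tilde W_\la(t)$ and $\tilde I_\la = I_\la$. Hence \eqref{W(local)} gives $\tilde U_\la \to U$ in $C^1_{loc}(\re)$, and \eqref{W(global)} gives $\delta_\la U_\la(|x|) = \delta_\la W_\la(-\log|x|) \to 2\sqrt2\,G(-\log|x|, s_0) = 2\sqrt2\,G(-\log|x|, -\log\sqrt{ab})$, uniformly on $\ol A$ because $x \mapsto -\log|x|$ is a homeomorphism of $\ol A$ onto $[\bar a,\bar b]$.

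I do not expect a genuine obstacle: the whole argument is a change of variables followed by an appeal to Proposition \ref{Prop:W(exp)}. The two points that need a little care are verifying that the order-preserving substitution really identifies $\ul u_\la$ with $\ul w_\la$ (so that ``exactly two solutions'' and the labeling $\ul u_\la, U_\la$ are justified), and the constant and domain bookkeeping in the last step that makes $\tilde U_\la$ coincide with $\tilde W_\la$ and turns $s_0$ into $-\log\sqrt{ab}$.
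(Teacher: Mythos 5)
Your proposal is correct and follows essentially the same route as the paper: reduce \eqref{Henon(L)(N=2)} to the one-dimensional problem \eqref{Eq:L(1-D)} via the substitution $s=-\log r$ of \eqref{transform(N=2)}, invoke the Joseph--Lundgren structure to get $\la_*$ and the two-solution picture, use the identity $\la\int_A e^{u_\la}|x|^{-2}\,dx = 2\pi\la\int_{\bar a}^{\bar b}e^{v_\la(s)}\,ds$, and then apply Proposition \ref{Prop:W(exp)} with $\bar a=-\log b$, $\bar b=-\log a$. Your extra bookkeeping (matching $\ul u_\la$ with $\ul w_\la$, and verifying $\tilde U_\la=\tilde W_\la$ and $s_0=-\log\sqrt{ab}$) is exactly the detail the paper leaves implicit.
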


\begin{proof}
For a radially symmetric solution $u_\la(|x|) = u_\la(r)$ to \eqref{Henon(L)(N=2)},
we use a transformation \eqref{transform(N=2)} to define $v_\la$, i.e.,
\[
	u_{\la}(r) = v_{\la}(s), \quad s = -\log r, \quad s \in (-\log b, -\log a).
\]
Then a direct computation shows that $v_{\la}$ solves
\[
	\begin{cases}
	-v_{\la}''(s) = \la e^{v_{\la}(s)}, \quad s \in (-\log b, -\log a), \\
	v_{\la}(-\log b) = v_{\la}(-\log a) = 0.
	\end{cases}
\]
By \cite{JL}, there exists $\la_* > 0$ such that $v_{\la}$ is either $\ul{w}_{\la}$ or $W_{\la}$ if $0 < \la < \la_*$. 
Also by the transformation above, we see
\[
	\la \int_{-\log b}^{-\log a} e^{v_{\la}(s)} ds = \frac{\la}{2\pi} \int_A \frac{e^{u_{\la}(x)}}{|x|^2} dx.
\] 
Applying Proposition \ref{Prop:W(exp)} with $\bar{a} = -\log b$, $\bar{b} = -\log a$, we obtain the result.
\end{proof}

Similar argument using the transformation \eqref{transform(N>2)} and Proposition \ref{Prop:W(exp)} leads to the following. 
The simple proof of it is omitted.

\begin{theorem}
\label{Thm:(L)N>2}
There exists $\la_* > 0$ such that the problem
\begin{equation}
\label{Henon(L)(N>2)}
	\begin{cases}
	-\Delta u = \la (N-2)^2 \dfrac{e^{u(x)}}{|x|^{2(N-1)}}, \quad &x \in A = \{ a < |x| < b \} \subset \re^N, \quad (N \ge 3) \\
	u(x) = 0, \quad &x \in \pd A, 
	\end{cases}
\end{equation}
admits no radially symmetric solution for $\la > \la_*$,
one radially symmetric solution for $\la = \la_*$, 
and just two radially symmetric solutions $\ul{u}_{\la}$ and $U_{\la}$ for $0 < \la < \la_*$.
Also it holds that
\begin{align*}
	\la \int_A \frac{e^{\ul{u}_{\la}}}{|x|^{2(N-1)}} dx \to 0, \quad
	\la \int_A \frac{e^{U_{\la}}}{|x|^{2(N-1)}} dx \to +\infty,
\end{align*}
and $\ul{u}_{\la} \to 0$ uniformly on $\ol{A}$ as $\la \to +0$.

Define $\delta_{\la} > 0$ and $\tilde{U}_{\la}$ by
\begin{equation}
	\begin{cases}
	&\la \delta_{\la}^2 e^{\| U_{\la} \|_{L^{\infty}(A)}} \equiv 1, \\
	&\tilde{U}_{\la}(t) = U_{\la} \( (\delta_{\la} t + \frac{a^{2-N}+b^{2-N}}{2})^{\frac{1}{2-N}} \) - \| U_{\la} \|_{L^{\infty}(A)}, \\
	&t \in \tilde{I_{\la}} = (-\frac{a^{2-N} - b^{2-N}}{2\delta_{\la}}, \frac{a^{2-N} - b^{2-N}}{2\delta_{\la}}). 
	\end{cases}
\end{equation}
then $\delta_{\la} \to 0$ as $\la \to +0$ and
\[
	\tilde{U}_{\la}(t) \to U(t) = \log \frac{4 e^{\sqrt{2}t}}{(1+e^{\sqrt{2}t})^2} \quad \text{in} \ C^1_{loc}(\re)
\]
as $\la \to +0$.
Also
\[
	\delta_{\la} U_{\la}(|x|) \to 2\sqrt{2} G\(|x|^{2-N}, \frac{a^{2-N} + b^{2-N}}{2}\)
\] 
as $\la \to +0$ uniformly in $C^0(\ol{A})$,
where $G(s,t)$ is the Green's function of the operator $-\frac{d^2}{ds^2}$ with the Dirichlet boundary condition on $[\bar{a}, \bar{b}]$
defiend in \eqref{Green} with $\bar{a} = b^{2-N}$, $\bar{b} = a^{2-N}$.
\end{theorem}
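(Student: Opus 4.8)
The plan is to reduce \eqref{Henon(L)(N>2)} to the one-dimensional problem \eqref{Eq:L(1-D)} exactly as in the power-nonlinearity case treated in \S3, and then quote Proposition \ref{Prop:W(exp)}. Concretely, for a radial solution $u_{\la}(|x|) = u_{\la}(r)$ I would introduce $v_{\la}$ via the transformation \eqref{transform(N>2)}, i.e.\ $u_{\la}(r) = v_{\la}(s)$ with $s = r^{2-N}$, $s \in (b^{2-N}, a^{2-N})$. A direct computation shows that the first-order term in the radial Laplacian cancels, giving $\Delta u_{\la}(r) = (N-2)^2 r^{2(1-N)} v_{\la}''(s)$; hence the factor $(N-2)^2$ and the weight $|x|^{-2(N-1)}$ are absorbed exactly, and $v_{\la}$ solves $-v_{\la}''(s) = \la e^{v_{\la}(s)}$ on $I = (\bar{a}, \bar{b})$ with $\bar{a} = b^{2-N}$, $\bar{b} = a^{2-N}$ and $v_{\la}(\bar{a}) = v_{\la}(\bar{b}) = 0$. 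Since $N \ge 3$, the map $r \mapsto r^{2-N}$ is an orientation-reversing $C^{\infty}$-diffeomorphism of $[a,b]$ onto $[b^{2-N}, a^{2-N}]$, so uniform and $C^1$ convergence are preserved under this change of variables and the roles of $\bar{a}$ and $\bar{b}$ are as stated.

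Next I would invoke the classical structure theory for \eqref{Eq:L(1-D)} (see \cite{JL}): there is $\la_* > 0$ such that for $0 < \la < \la_*$ the only solutions are the minimal one $\ul{w}_{\la}$ and the unstable one $W_{\la}$, for $\la = \la_*$ there is exactly one, and none for $\la > \la_*$; correspondingly $u_{\la}$ is either $\ul{u}_{\la}$ or $U_{\la}$. The change of variables also identifies the integrals: with $dx = \omega_{N-1} r^{N-1}\, dr$ in polar coordinates and $ds = (2-N) r^{1-N}\, dr$, one finds
\[
	\la \int_A \frac{(N-2)^2 e^{u_{\la}(x)}}{|x|^{2(N-1)}}\, dx = \omega_{N-1}(N-2)\, \la \int_{\bar{a}}^{\bar{b}} e^{v_{\la}(s)}\, ds,
\]
so $\gamma_{\la} = \la \int_{\bar{a}}^{\bar{b}} e^{W_{\la}}\, ds$ is a fixed positive multiple of $\la \int_A |x|^{-2(N-1)} e^{U_{\la}}\, dx$. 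Hence Proposition \ref{Prop:W(exp)} gives that the latter diverges while the corresponding quantity for $\ul{u}_{\la}$ tends to $0$, and $\ul{u}_{\la}(r) = \ul{w}_{\la}(r^{2-N}) \to 0$ uniformly on $[a,b]$.

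Finally I would translate the two conclusions of Proposition \ref{Prop:W(exp)}. Since $\| U_{\la} \|_{L^{\infty}(A)} = \| W_{\la} \|_{L^{\infty}(\bar{a}, \bar{b})}$ and the defining relation $\la \delta_{\la}^2 e^{\| U_{\la} \|_{L^{\infty}(A)}} \equiv 1$ coincides with that in \eqref{Wtilde(exp)}, writing $s = \delta_{\la} t + s_0 = |x|^{2-N}$ with $s_0 = \tfrac{\bar{a}+\bar{b}}{2} = \tfrac{a^{2-N}+b^{2-N}}{2}$ gives $\tilde{U}_{\la}(t) = \tilde{W}_{\la}(t)$; moreover $s_0 - \bar{a} = \bar{b} - s_0 = \tfrac{a^{2-N}-b^{2-N}}{2}$, so $\tilde{I}_{\la} = (\tfrac{\bar{a}-s_0}{\delta_{\la}}, \tfrac{\bar{b}-s_0}{\delta_{\la}})$ is exactly $(-\tfrac{a^{2-N}-b^{2-N}}{2\delta_{\la}}, \tfrac{a^{2-N}-b^{2-N}}{2\delta_{\la}})$, and \eqref{W(local)} yields $\tilde{U}_{\la} \to U$ in $C^1_{loc}(\re)$. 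Likewise \eqref{W(global)} gives $\delta_{\la} W_{\la}(s) \to 2\sqrt{2}\, G(s, s_0)$ uniformly on $[\bar{a}, \bar{b}]$, and composing with $s = |x|^{2-N}$ produces $\delta_{\la} U_{\la}(|x|) \to 2\sqrt{2}\, G(|x|^{2-N}, \tfrac{a^{2-N}+b^{2-N}}{2})$ in $C^0(\ol{A})$. I do not expect any genuine analytic obstacle here, as everything is bootstrapped from Proposition \ref{Prop:W(exp)}; the only points requiring care are the bookkeeping ones, namely verifying that the first-order term drops out under $s = r^{2-N}$ so that $(N-2)^2$ is precisely the right normalization, and tracking the orientation reversal $a^{2-N} > b^{2-N}$ (valid since $N \ge 3$) consistently through the endpoints, $s_0$, and the rescaled interval.
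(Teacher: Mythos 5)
Your proposal is correct and is exactly the argument the paper intends: the paper itself omits the proof, stating only that it follows by ``a similar argument using the transformation \eqref{transform(N>2)} and Proposition \ref{Prop:W(exp)},'' which is precisely what you carry out (and your verification that the first-order term cancels under $s=r^{2-N}$, the identification of the intervals and of $\gamma_{\la}$ up to the constant $\omega_{N-1}(N-2)$, and the translation of \eqref{W(local)} and \eqref{W(global)} are all accurate). No discrepancies with the paper's approach.
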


\vspace{1em}
\noindent\textbf{Data Availability.} 
Data sharing is not applicable to this article as no datasets were generated or analysed during the current study.

\vspace{1em}
\noindent\textbf{Conflicts of Interest Statement.}
The author has no conflicts of interest to disclose.

\vspace{1em}
\noindent\textbf{Acknowledgement.} 
The author was supported by JSPS Grant-in-Aid for Scientific Research (B), No. 23H01084, 
and was partly supported by Osaka Central University Advanced Mathematical Institute (MEXT Joint Usage/Research Center on Mathematics and Theoretical Physics).


\begin{thebibliography}{99}


\bibitem{GNN}
B. Gidas, W. M. Ni, and L. Nirenberg: 
{\it Symmetry and related properties via the maximum principle},
\newblock Comm. Math. Phys., 68 (1979), 209--243.

\bibitem{Gladiali-Grossi(AA)}
F. Gladiali, and M. Grossi: 
{\it Singular limit of radial solutions in an annulus},
\newblock Asymptotic Anal., 55 (2007), 73--83.

\bibitem{Grossi(JDE)}
M. Grossi: 
{\it Asymptotic behavior of the Kazdan-Warner solution in the annulus},
\newblock J. Differential Equations, 223 (2006), 96--111.

\bibitem{CSLin}
C. S. Lin:
{\it Uniqueness of least energy solutions to a semilinear elliptic equation in $\re^2$},
\newblock Manuscripta Math., 84 (1994), 13--19.

\bibitem{Shibata(JMAA)}
T. Shibata: 
{\it Global and asymptotic behaviors of bifurcation curves of one-dimensional nonlocal elliptic equations},
\newblock J. Math. Anal. Appl., 516 (2022),126525.

\bibitem{JL}
D. D. Joseph, and T. S. Lundgren: 
{\it Quasilinear Dirichlet problems driven by positive sources},
\newblock Arch. Rational. Mech. Anal., 49 (1973), 241--269.

%







\end{thebibliography}
\end{document}